\documentclass[a4paper,10pt,reqno, english]{amsart}

\usepackage{amsmath,amssymb,amscd,amsthm,amsfonts}
\usepackage{graphicx,subcaption}
\usepackage{hyperref}
\usepackage{dsfont}
\usepackage{xcolor}
\usepackage{color,soul}
\usepackage[utf8]{inputenc}
\usepackage[nobysame, alphabetic]{amsrefs}
\usepackage[capitalize]{cleveref}
\usepackage{float}

\newtheorem{theorem}{Theorem}[section]
\newtheorem{corollary}[theorem]{Corollary}
\newtheorem{lemma}[theorem]{Lemma}

\newtheorem{proposition}[theorem]{Proposition}

\theoremstyle{definition}
\newtheorem{remark}[theorem]{Remark}
\newtheorem{definition}[theorem]{Definition}

\DeclareMathOperator{\bbox}{box}
\newcommand{\R}{\mathds{R}}
\newcommand{\rr}{\mathds{R}}

\newcommand{\cal}[1]{\mathcal{#1}}

\title{Extensions of discrete Helly theorems for boxes}

\subjclass{52A35}
\keywords{Helly's theorem, H-convex, axis-parallel boxes, Fractional Helly, Hypergraph Saturation}

\author[Edwards]{Timothy Edwards}\address{Gannon University, Erie, PA 16541} 
\email{edwards096@gannon.edu}\thanks{The research of T. Edwards was supported by NSF grant DMS 2051026.}

\author[Sober\'on]{Pablo Sober\'on}\address{Baruch College and The Graduate Center, City University of New York, New York, NY 10010} \thanks{The research of P. Sober\'on was supported by NSF CAREER award no. 2237324, NSF award no. 2054419 and a PSC-CUNY Trad B award.} 
\email{psoberon@gc.cuny.edu}

\begin{document}

\begin{abstract}
We prove extensions of Halman's discrete Helly theorem for axis-parallel boxes in $\rr^d$.  Halman's theorem says that, given a set $S$ in $\rr^d$, if $F$ is a finite family of axis-parallel boxes such that the intersection of any $2d$ contains a point of $S$, then the intersection of $F$ contains a point of $S$.  We prove colorful, fractional, and quantitative versions of Halman's theorem.  For the fractional versions, it is enough to check that many $(d+1)$-tuples of the family contain points of $S$.  Among the colorful versions we include variants where the coloring condition is replaced by an arbitrary matroid.  Our results generalize beyond axis-parallel boxes to $H$-convex sets.
\end{abstract}

\maketitle

\section{Introduction}

The study of intersection patterns of convex sets in Euclidean spaces is a major area in combinatorial geometry.  One of the central results is Helly's theorem, which characterizes finite families of convex sets in $\rr^d$ with non-empty intersection \cite{Helly:1923wr}.  It states that \textit{if every $d+1$ or fewer sets of a finite family of convex sets in $\rr^d$ have non-empty intersection, then so does the entire family.}  There are now a myriad generalizations and extensions of Helly's theorem.  For example, we might weaken the condition on intersections of small subfamilies, we might want to guarantee that the intersection of the entire family is larger, or we might want to replace the condition of convexity by another geometric or topological constraint on the sets \cites{Amenta:2017ed, Holmsen:2017uf, Barany2022}.

A particular family of sets for which there are Helly-type theorems are axis-parallel boxes \cites{Eckhoff:1988eo, Eckhoff:1991jj}.  As these are sets which are much more constrained, we expect to be able to prove stronger results on their intersection structure.  A simple folklore result, for example, states that \textit{a finite family of axis parallel-boxes has a non-empty intersection if an only if every pair of boxes has non-empty intersection.}  Helly-type results of this kind don't give us much information about the points of intersection of the whole family.  However, a recent result of Halman \cite{Halman2008}, the discrete Helly theorem for axis-parallel boxes, gives insight in this direction.

\begin{theorem}[Halman 2008]\label{thm:halman}
    Let $d$ be a positive integer.  Let $S \subset \rr^d$ be a finite set, and $\mathcal{F}$ be a finite family of axis-parallel boxes in $\rr^d$.  If every $2d$ boxes have a point of $S$ in their intersection, then $\bigcap \mathcal{F}$ has a point of $S$.
\end{theorem}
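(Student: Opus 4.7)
The key structural fact to exploit is that the intersection of axis-parallel boxes is itself an axis-parallel box, and the coordinate projections of that intersection are determined by only two boxes per coordinate axis. My plan is to identify $2d$ specific boxes whose intersection already equals $\bigcap \mathcal{F}$, and then apply the hypothesis to those.

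First I would observe that the hypothesis implies in particular that every $2d$-tuple of boxes in $\mathcal{F}$ has non-empty intersection. Since intersections of axis-parallel boxes are axis-parallel boxes, and since the folklore Helly theorem for boxes requires only pairwise intersections to conclude the full intersection is non-empty, this means $B^* := \bigcap \mathcal{F}$ is a non-empty axis-parallel box. Write $B^* = [a_1^*, b_1^*] \times \cdots \times [a_d^*, b_d^*]$, where $a_i^* = \max_{B \in \mathcal{F}} a_i(B)$ and $b_i^* = \min_{B \in \mathcal{F}} b_i(B)$, with $a_i(B), b_i(B)$ denoting the lower and upper $i$-th coordinate endpoints of $B$.

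Next, for each coordinate $i \in \{1, \ldots, d\}$, because $\mathcal{F}$ is finite, the maximum defining $a_i^*$ is attained by some $L_i \in \mathcal{F}$, and likewise the minimum defining $b_i^*$ is attained by some $U_i \in \mathcal{F}$. Consider the subfamily $\mathcal{F}_0 = \{L_1, U_1, \ldots, L_d, U_d\}$, which has at most $2d$ elements. For each coordinate $i$, the $i$-th projection of $\bigcap \mathcal{F}_0$ is $[\,\max_{B \in \mathcal{F}_0} a_i(B),\ \min_{B \in \mathcal{F}_0} b_i(B)\,]$; but $L_i \in \mathcal{F}_0$ already achieves the \emph{global} maximum $a_i^*$ over all of $\mathcal{F}$, and $U_i \in \mathcal{F}_0$ achieves the global minimum $b_i^*$, so this projection equals $[a_i^*, b_i^*]$. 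Therefore $\bigcap \mathcal{F}_0 = B^*$.

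Finally I apply the hypothesis: the subfamily $\mathcal{F}_0$ has size at most $2d$, so $\bigcap \mathcal{F}_0$ contains a point of $S$, and since $\bigcap \mathcal{F}_0 = B^* = \bigcap \mathcal{F}$, we conclude that $\bigcap \mathcal{F}$ contains a point of $S$. There is no real obstacle here, the content is entirely in the structural observation that $2d$ boxes suffice to realize the full intersection; the role of the hypothesis is simply to supply an $S$-point inside the intersection of any such distinguished $2d$-tuple. (If one is pedantic about the hypothesis when $|\mathcal{F}| < 2d$, the standard reading is ``every subfamily of size at most $2d$,'' under which the argument goes through unchanged.)
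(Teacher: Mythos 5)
Your proof is correct, and it is genuinely different from (and more elementary than) both proofs in the paper. Your entire argument rests on the structural fact that a finite intersection of axis-parallel boxes is already realized by at most $2d$ of them: for each coordinate $i$, pick the box attaining the largest lower endpoint and the box attaining the smallest upper endpoint, and the intersection of these at most $2d$ boxes (padded arbitrarily to size exactly $2d$ if needed) equals $\bigcap \mathcal{F}$; the hypothesis applied to that one distinguished tuple finishes the proof, and the same observation even gives the monochromatic quantitative version ($n$ points of $S$) for free. The paper never argues this way: its first proof is the lexicographic extremal argument of \cref{thm: colorful quantitative halman}, which selects a colorful $(2d-1)$-tuple maximizing the relevant lexicographic value and shows every box of the remaining color contains the extremal points, and its second proof is a Radon-style induction based on \cref{lemma:Radon-box}. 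The reason for that heavier machinery is that the paper's real targets are the colorful, very colorful, matroid, and $H$-convex strengthenings: in the colorful setting your distinguished boxes may repeat color classes and the conclusion concerns the full intersection of a single color class, so the ``intersection is witnessed by $2d$ boxes'' shortcut no longer suffices, whereas the lexicographic argument transfers verbatim (including to $H$-convex sets in \cref{thm:discrete-helly-hconvex}). In short, your route buys a very short, transparent proof of Halman's theorem itself; the paper's routes buy the stronger statements on which the rest of the paper depends. Your parenthetical about families of size below $2d$ and the preliminary appeal to the pairwise Helly property for boxes are fine, though the latter is not strictly needed once you note that $\bigcap\mathcal{F}_0=\bigcap\mathcal{F}$ holds coordinatewise regardless.
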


The goal of this manuscript is to extend Halman's result in several directions in which Helly's theorem has been generalized.  Notice that the Helly number, the size of the subfamilies we need to check, grows from $2$ to $2d$ once we have the set $S$ involved and $2d$ is optimal.  One feature of Helly's theorem for general convex sets is that the important parameter in a vast number of generalizations is always $d+1$.  This can be seen in the fractional and colorful versions of this result.  In contrast, many of the generalizations of Halman's result will have different Helly numbers. These results provide a concrete example that exhibits how different versions of Helly's theorem relate to each other.  All the results in this manuscript generalize to $H$-convex sets, which are a family of convex sets that contains axis-parallel boxes \cite{Boltyanski:2003ir}.  As the literature for axis-parallel boxes is much more extensive, we first state and prove our results for axis-parallel boxes, and in \cref{sec:hconvex} we describe their generalizations.

Our first result is a quantitative colorful version of Halman's result.  We prove the result for several families of axis-parallel boxes.  Moreover, we can also impose how many points of $S$ we want in the conclusion of the theorem.

\begin{theorem}\label{thm: colorful quantitative halman}
    Let $\cal{B}_1,\cal{B}_2,\cdots,\cal{B}_{2d}$ be finite families of axis parallel boxes in $\R^d$. Let $S\subseteq\rr^d$ be a discrete set and $n$ be a positive integer. Suppose that for every choice of  $B_i\in\cal{B}_i$ for each $i \in [2d]$ we have $\left| \bigcap_{i=1}^{2d}B_i\cap S\right|\geq n$.
    Then there exists an index $l\in[n]$ such that the intersection of $\cal{B}_l$ contains al least $n$ points of $S$.%  In other words, $|\cap\cal{B}_l\cap S|\geq n$.
\end{theorem}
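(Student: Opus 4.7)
The plan is to reduce the conclusion to a single application of the hypothesis by producing a colorful $2d$-tuple $(B_i)_{i \in [2d]}$ whose intersection is contained in $I_{l_0} := \bigcap \mathcal{B}_{l_0}$ for some color $l_0$. The hypothesis then forces $|I_{l_0} \cap S| \ge |\bigcap_i B_i \cap S| \ge n$. The tool enabling this is the extremal-box reduction behind Halman's theorem: the intersection of any finite family of axis-parallel boxes equals the intersection of at most $2d$ of its members, namely those attaining the largest lower endpoint or the smallest upper endpoint in some coordinate.

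Applied to the union $\bigcup_l \mathcal{B}_l$, this identifies $2d$ \emph{global} extremal slots, one per coordinate and per direction, each realized by a box living in some color. Let $\deg(l)$ count how many slots are realized inside $\mathcal{B}_l$, so $\sum_l \deg(l) = 2d$. If $\deg(l) = 1$ for every $l \in [2d]$, I pick in each color the box realizing its unique slot. The resulting colorful tuple has intersection equal to $\bigcap_l I_l$, which sits inside every $I_l$ and, by hypothesis, contains at least $n$ points of $S$, so every color class witnesses the conclusion.

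Otherwise some color has $\deg \ge 2$, and by pigeonhole there is a \emph{free} color $l^*$ with $\deg(l^*) = 0$. Targeting $l_0 = l^*$, I would build the tuple as follows: in each color with at least one global slot realized, pick a box realizing one of those slots (covering that coordinate direction); in each free color, pick a box realizing a local extremal $\alpha_{l^*, j} = \max_{B \in \mathcal{B}_{l^*}} a_j^B$ or $\beta_{l^*, j} = \min_{B \in \mathcal{B}_{l^*}} b_j^B$ of $I_{l^*}$ in a still-uncovered coordinate direction. A counting argument identifies the number of uncovered slots with the number of free colors, motivating a Hall-type matching; together with the inequalities $\alpha_{l^*, j} \le a_j^*$ and $\beta_{l^*, j} \ge b_j^*$ this forces $\bigcap_i B_i \subseteq I_{l^*}$.

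The main obstacle is making this matching work uniformly. When the local extremals of a free color other than $l^*$ fail to dominate those of $l^*$, or when a single high-degree color packs several extremals pointing in unhelpful coordinate directions, the naive assignment can leave some slot of $I_{l^*}$ uncovered. Handling this will likely require either exchanging which slot is realized inside a high-degree color or varying the choice of the target $l^*$ among the free colors, until a valid assignment is found.
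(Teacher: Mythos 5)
Your reduction works cleanly only in the case $\deg(l)=1$ for every color: there the chosen colorful tuple realizes all $2d$ global extremes, so its intersection equals $\bigcap_l \bigcap \mathcal{B}_l$ and the hypothesis finishes the argument. But the other case, which you target with a free color $l^*$, is the heart of the theorem and is left unresolved -- as you yourself concede in the final paragraph. The difficulty is structural, not a matter of tidying up a matching. A color of degree $\ge 2$ contributes only one box to the colorful tuple, so it can certify at most one of the several global slots it owns; the uncovered slots of $I_{l^*}$ must then be certified by boxes from other colors, and nothing in your setup guarantees such boxes exist: a box of a free color $l'\neq l^*$ lives in $\mathcal{B}_{l'}$, so it cannot ``realize a local extremal of $I_{l^*}$'' as written, and its own local extremals need not dominate the thresholds $\alpha_{l^*,j},\beta_{l^*,j}$ at all (all boxes of $\mathcal{B}_{l'}$ may strictly contain the slab of $I_{l^*}$ in every direction, certifying nothing). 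For instance, in $\mathbb{R}^2$ with color $1$ owning both global $x$-extremes, color $2$ both global $y$-extremes, and colors $3,4$ free, your assignment covers at most one $x$-facet and one $y$-facet of $I_3$ from colors $1,2$, one more facet from color $3$ itself, and possibly nothing from color $4$; whether a cleverer choice of target and slots always exists is exactly the unproven claim. What you are implicitly asserting is a purely geometric ``containment'' strengthening of colorful Halman: \emph{some colorful $2d$-tuple has intersection contained in $\bigcap\mathcal{B}_{l_0}$ for some $l_0$}. That statement is strictly stronger than the theorem, your sketch never uses the hypothesis that colorful tuples meet $S$ (which at minimum would be needed to rule out conflicting cross-color constraints in a Hall-type argument), and no proof of it is given. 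So there is a genuine gap at the decisive step.

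For comparison, the paper's proof sidesteps any geometric covering entirely and uses the hypothesis on $S$ from the start: order $S$ lexicographically, and among all \emph{colorful $(2d-1)$-tuples} choose one, $\mathcal{A}$, maximizing the $n$-th smallest point of $S$ in its intersection (well defined precisely because every colorful tuple contains $\ge n$ points of $S$). For any box $B$ of the missing color, the box $(\bigcap\mathcal{A})\cap B$ is cut out by at most $2d-1$ of the sets in $\mathcal{A}\cup\{B\}$ once one drops the constraint giving its largest first coordinate; this produces another colorful $(2d-1)$-tuple whose points of $S$ are either in $(\bigcap\mathcal{A})\cap B$ or lexicographically larger, and maximality forces the first $n$ points of $\bigcap\mathcal{A}$ to lie in $B$. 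If you want to salvage your approach you would need to prove the containment statement above (say, by a Hall-type argument exploiting that every colorful tuple is nonempty); otherwise the extremal lexicographic argument is the route that actually closes the proof.
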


Statements like this are known as ``colorful'' theorems since we can consider all sets in a family $\cal{B}_i$ as being drawn with a particular color.  The main gist is that hypotheses of the colorful $2d$-tuples imply results on a monochromatic family.  Quantitative discrete Helly theorems for convex sets require conditions on the set $S$.  Additionally, the quantitative Helly number often depends on the number $n$ of $S$ we guarantee in the intersection, while for axis-parallel boxes this is not the case (see, e.g., \cites{Aliev:2014kh, DeLoera:2017bl, Chestnut2018}).  For general convex sets, the dependence of the Helly number on $n$ can be removed if we impose additional structural conditions on the set of $n$ points, such as being collinear \cite{Dillon2021}.

Throughout the paper, we prove several other colorful versions of Halman's theorem. 
 A ``very colorful'' version of the theorem above (\cref{thm:very-colorful-quantitative-halman}), a version where the condition is replaced by a matroid (\cref{thm:matroid-halman}), and a version where the number of color classes is reduced to $d+1$ with a weaker conclusion (\cref{thm:colorful-halman-few-colors}).  We also present two fractional versions of Halman's theorem.  The first one, \cref{thm:fractional-strong}, says that if and $\alpha$-fraction of $2d$-tuples of $\cal B$ contain a point of $S$ in their intersection, then there is a $\beta$-fraction of $\cal B$ whose intersection contains points of $S$, and $\beta \to 1$ as $\alpha \to 1$.  This shows a robustness of Halman's theorem.

The second fractional version is perhaps the most interesting one, as we need to check smaller subfamilies.

\begin{theorem}\label{thm:fractional-small}
    Let $d$ be a positive integer and $\alpha \in (0,1)$.  There exists $\beta = \beta(\alpha, d) > 0$ such that the following holds.  If $S$ is a finite set in $\rr^d$ and $\cal B$ is a finite family of axis-parallel boxes of $\rr^d$ such that $\alpha \binom{|\cal B|}{d+1}$ of the $(d+1)$-tuples of $\cal{B}$ satisfy that their intersection contains points of $S$, then there is a subfamily of at least $\beta |\cal B|$ sets of $\cal B$ whose intersection contains a point of $S$.
\end{theorem}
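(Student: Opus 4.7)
The plan is to derive \cref{thm:fractional-small} from the colorful Halman theorem with $d+1$ color classes (\cref{thm:colorful-halman-few-colors}) by a B\'ar\'any-style random-partitioning and cleaning argument, mirroring the classical reduction of the Katchalski--Liu fractional Helly theorem for convex sets to Lov\'asz's colorful Helly theorem. The output of \cref{thm:colorful-halman-few-colors} is, morally, a monochromatic sub-family of linear size whose intersection contains a point of $S$, and such conclusions are known to be strong enough to feed back into the probabilistic machinery below.

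First, randomly $(d+1)$-color the boxes of $\mathcal B$: independently assign each box a color chosen uniformly from $\{1,\ldots,d+1\}$, producing color classes $\mathcal B_1,\ldots,\mathcal B_{d+1}$ of expected size $|\mathcal B|/(d+1)$. A uniformly random unordered $(d+1)$-tuple is colorful with probability $(d+1)!/(d+1)^{d+1}$, so by linearity of expectation the expected number of colorful $(d+1)$-tuples whose intersection is disjoint from $S$ is at most $c_d(1-\alpha)\binom{|\mathcal B|}{d+1}$ for a constant $c_d$ depending only on $d$.

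Second, fix a coloring that attains at most this expected number of ``bad'' colorful $(d+1)$-tuples, and perform a cleaning step: remove a carefully chosen sub-family of $\mathcal B$ so that every surviving colorful $(d+1)$-tuple has a point of $S$ in its intersection. Applying \cref{thm:colorful-halman-few-colors} to the cleaned colored family then yields a monochromatic sub-family of size at least $\beta|\mathcal B|$ whose intersection contains a point of $S$, which is exactly the conclusion we want.

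The main obstacle is the cleaning step: one must bound the number of removed boxes by $(1-\beta)|\mathcal B|$ with $\beta=\beta(\alpha,d)>0$ independent of $|\mathcal B|$. A direct vertex-cover bound on the bad-tuple hypergraph is far too weak when $\alpha$ is not close to $1$, since the number of bad tuples can be $\Theta((1-\alpha)|\mathcal B|^{d+1})$. The natural way out is either to iterate the argument by first locating, through a selection-lemma--type counting step, a ``popular'' box contained in many good tuples, or to first invoke the classical Katchalski--Liu fractional Helly theorem for convex sets --- applicable since every good $(d+1)$-tuple is in particular intersecting --- to reduce to a linear-sized sub-family of $\mathcal B$ with common non-empty intersection, after which the colorful argument on this sub-family promotes the common point to a point of $S$.
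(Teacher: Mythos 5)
Your plan has a genuine gap, and it is the one you yourself flag: the cleaning step cannot work when $\alpha$ is bounded away from $1$. After a random $(d+1)$-coloring, a constant fraction of the colorful $(d+1)$-tuples are bad, and no removal of at most $(1-\beta)|\mathcal B|$ boxes can make \emph{every} surviving colorful tuple good; the hypothesis is inherently fractional, so any argument that needs all colorful tuples to be good is doomed at the outset. Neither fallback repairs this. Katchalski--Liu applied to the boxes only produces a linear subfamily with non-empty intersection of the boxes themselves; there is no mechanism that ``promotes'' that common point to a point of $S$, and restricting to such a subfamily gives no control over which good $(d+1)$-tuples survive, so you are back where you started. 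There is also a misreading of \cref{thm:colorful-halman-few-colors}: its hypothesis requires \emph{all} colorful $(d+1)$-tuples among color classes of a fixed constant size $t=t(r,d)$ to contain a point of $S$, and its conclusion is only that some $r$ boxes in one class (a constant number, e.g.\ $r=2d$), not a monochromatic family of linear size, have a point of $S$ in their intersection. So even if the cleaning succeeded, this theorem would not deliver the $\beta|\mathcal B|$-sized subfamily you claim.

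The route the paper takes avoids both problems and is the one you should aim for. From the hypothesis that $\alpha\binom{|\mathcal B|}{d+1}$ of the $(d+1)$-tuples contain a point of $S$, apply the Erd\H{o}s--Simonovits supersaturation theorem to the $(d+1)$-uniform hypergraph of good tuples: it contains at least $\delta n^{(d+1)t}$ copies of the complete $(d+1)$-partite hypergraph $K^{d+1}(t)$, where $t=t(2d,d)$ is the constant from \cref{thm:colorful-halman-few-colors}. Each such copy satisfies the hypothesis of \cref{thm:colorful-halman-few-colors} with $r=2d$, hence yields $2d$ boxes (in one of its classes) whose intersection contains a point of $S$; since each fixed $2d$-tuple lies in at most $O\bigl(n^{(d+1)t-2d}\bigr)$ copies, a positive fraction of all $2d$-tuples of $\mathcal B$ contain a point of $S$. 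Now finish with the fractional Halman theorem for $2d$-tuples, \cref{thm:fractional-strong} (which comes from the $(2d-1)$-collapsibility of $K(\mathcal B,S)$), to obtain a subfamily of size $\beta|\mathcal B|$ whose intersection contains a point of $S$. Note also that \cref{thm:colorful-halman-few-colors} is itself nontrivial and is proved in the paper via the intermixing lemmas plus an iterated trimming argument against the colorful Helly theorem; your proposal treats it as a black box with a stronger conclusion than it actually has.
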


Of course, in this theorem we cannot expect $\beta \to 1$ as $\alpha \to 1$ since Halman's theorem is optimal.  It mimics the behavior of Helly's theorem for integer points.  If we want to check if a family of convex sets has an integer point in its intersection, it's sufficient to check the intersection of its $2^d$-tuples \cites{Doignon:1973ht, Scarf:1977va, Bell:1977tm}, and $2^d$ is optimal.  However, if we want to check that there is an integer point in a positive fraction of the family, it's enough to check that a positive fraction of the $(d+1)$-tuples have a point in their intersection \cite{Anonymous:PHt9HPGF}.  This result extends to families of points with finite Helly number \cite{Averkov2012}, for volumetric versions of Helly's theorem \cite{Frankl2024}, or set systems with topological conditions \cite{Goaoc2021}.

The size of the sets we need to check to guarantee a fractional Halman is neither the Helly number for axis parallel boxes (two), nor Halman's number ($2d$).  The reason that $d+1$ appears here is because it is the number for a colorful Helly for axis-parallel boxes.  It is not enough to check $d$-tuples.  Given positive integers $n,d$, we can take $S = \rr^d$ and make a family of axis-parallel boxes where $\sim n/d$ of the boxes are the intersection of a hyperplane orthogonal to $e_i$, the $i$-th element of the canonical basis, with the unit square $[0,1]^d$.  Then, we have roughly $(n/d)^d \sim (d!/d^d) \binom{n}{d}$ intersecting $d$-tuples, but no subset of size $d+1$ or lager intersects.  We can also make $S$ finite while preserving the condition of the theorem.

As fractional Helly theorems are known to imply $(p,q)$ variants \cites{Alon:1992ta, Alon:1996uf}, we obtain the following $(p,q)$ generalization of Halman's theorem as a direct consequence of \cref{thm:fractional-small}.

\begin{theorem}
    Let $p,q,d$ be positive integers such that $p \ge q \ge d+1$.  There exists a positive integer $c=c(p,q,d)$ such that the following holds.  For any set $S \subset \rr^d$ and any finite family $\cal{B}$ of axis-parallel boxes, if out of every $p$ boxes in $\cal{B}$ there are $q$ whose intersection contains a point of $S$, then there is a set of $c$ points of $S$ that pierces every set in $\cal{B}$.
\end{theorem}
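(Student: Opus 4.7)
The plan is to deduce the theorem from \cref{thm:fractional-small} via the Alon--Kleitman framework, which converts a fractional Helly theorem into a $(p,q)$-theorem whenever the underlying set system has bounded VC-dimension.

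My first step would be to extract a fractional Helly hypothesis from the $(p,q)$-property. In every $p$-subset of $\cal{B}$, some $q$-subset has a point of $S$ in its intersection; and since the intersection only grows as we drop boxes, each of the $\binom{q}{d+1}$ $(d+1)$-subsets of that $q$-subset has a point of $S$ in its intersection as well. A double-counting argument then shows that at least $\alpha\binom{|\cal{B}|}{d+1}$ of the $(d+1)$-tuples of $\cal{B}$ have intersection meeting $S$, where $\alpha = \binom{q}{d+1}/\binom{p}{d+1}$.

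Next I would apply \cref{thm:fractional-small} with this $\alpha$ to produce a single point of $S$ lying in at least $\beta|\cal{B}|$ boxes, for some $\beta = \beta(\alpha,d) > 0$. The same argument, applied to weighted (or multiset) copies of $\cal{B}$, shows that for every probability measure $\mu$ on $\cal{B}$ some point of $S$ lies in sets of total $\mu$-weight at least $\beta$; by LP duality this bounds the fractional transversal number $\tau^*(\cal{B}) \le 1/\beta$.

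The last step is the standard Alon--Kleitman $\varepsilon$-net argument. Axis-parallel boxes in $\rr^d$ have VC-dimension at most $2d$, and this bound is inherited by the restricted hypergraph $\{B \cap S : B \in \cal{B}\}$ on the ground set $S$. The Haussler--Welzl $\varepsilon$-net theorem with $\varepsilon = \beta$ then yields a transversal of size $c = O\bigl((d/\beta)\log(1/\beta)\bigr)$, which depends only on $p$, $q$, and $d$. The main technical obstacle is the passage in Step 2 to weighted versions of $\cal{B}$ together with the LP-duality step, but both are routine for any fractional Helly theorem and rely on no feature of the boxes beyond \cref{thm:fractional-small} and their bounded VC-dimension.
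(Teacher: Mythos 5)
Your proposal is correct and takes essentially the same route as the paper, which obtains this theorem from \cref{thm:fractional-small} simply by invoking the Alon--Kleitman machinery that converts fractional Helly theorems into $(p,q)$ theorems. Your write-up just instantiates that machinery concretely (double counting to get the fractional hypothesis, LP duality to bound $\tau^*$, and a Haussler--Welzl $\varepsilon$-net over the ground set $S$ using that boxes have VC dimension at most $2d$), with the remaining multiset/weighting details being the standard ones you correctly identify as routine.
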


The rest of the paper is organized as follows.

\begin{itemize}
    \item In \cref{sec:new-proofs} we prove the colorful quantitative version of Halman's theorem and the ``very colorful'' version.  We also give another proof of Halman's theorem using an argument similar to Radon's theorem.
    \item In \cref{sec:topological} we introduce a family of simplicial complexes arising from Halman's theorem and prove that they have bounded Leray numbers.  With this, we prove the matroid colorful Halman theorem and the our bounds for the fractional Halman theorem when we check $2d$-tuples.
    \item In \cref{sec:fractional-and-beyond} we prove the fractional version of Halman's theorem for $(d+1)$-tuples.  We use a hypergraph saturation argument along with the colorful version of Halman's theorem for $d+1$ colors.
    \item Finally, in \cref{sec:hconvex} we introduce $H$-convex sets and state how the results of this manuscript generalize to those sets.
\end{itemize}

\section{New proofs of Halman's theorem}\label{sec:new-proofs}

\subsection{Colorful and quantitative Halman}

Let us start by proving \cref{thm: colorful quantitative halman}.  For this, we introduce the lexicographic order $\prec$ on $\rr^d$.  Given two different points $x = (x_1, \dots, x_d)$ and $ y = (y_1, \dots, y_d) $ in $\rr^d$, we say $x \prec y$ if there exists an $i \in [d]$ such that $x_j = y_j$ for all $j < i$ and $x_i < y_i$.  In other words, in the first coordinate in which the points are different, $x$ has the smaller value.

%\begin{theorem}\label{thm: colorful quantitative halman}
 %   Let $\cal{B}_1,\cal{B}_2,\cdots,\cal{B}_{2d}$ be finite families of axis parallel boxes in $\R^d$. Let $S\subseteq\R^d$ be a discrete set. Suppose that every colorful choice of $2d$ boxes contains at least $n$ points of $S$ in its intersection. That is, for every choice of  $B_i\in\cal{B}_i$ we have $|\displaystyle\bigcap_{i=1}^{2d}B_i\cap S|\geq n$.
  %  Then there exists an index $l\in[n]$ such that the intersection of $\cal{B}_l$ contains $n$ points of $S$. That is $\cap\cal{B}_l\cap S\geq n$.
%\end{theorem}

\begin{proof}[Proof of \cref{thm: colorful quantitative halman}]
     We call a subfamily $\cal{F}\subseteq\cal{B}_1\cup\cdots\cup\cal{B}_{2d}$ colorful if no two boxes in $\cal{F}$ come from the same indexed set $\cal{B}_i$. For each $k \in [n]$, consider a function $f_k$ from the set of all colorful $(2d-1)$-tuples to the set $S$. The function $f_k$ assigns to each colorful $(2d-1)$-tuple the $k$-th smallest point of $S$, under the lexicographic order, which is contained in the intersection of this tuple. As $S$ is discrete and each colorful $(2d-1)$-tuple must contain at least $n$ points of $S$, the function $f_k$ is well defined.  Since there is a finite number of possible $(2d-1)$-tuples, there is a $(2d-1)$-tuple $\cal{A}$ on which $f_n$ attains its maximal value (again, under the lexicographic order).
     
 Without loss of generality, assume that $\cal{A}$ does not have a set of $\cal{B}_{2d}$ and $\cal{A}=\{A_1,\cdots,A_{2d-1}\}$ where $A_i\in\cal{B}_i$ for each $i \in [2d-1]$. Consider a set $B\in\cal{B}_{2d}$. We claim that $f_1(\cal{A}),\dots,f_n(\cal{A})\in B$. The intersection $(\cap\cal{A})\cap B$ is also an axis-parallel box, so its projection to the $i$-th coordinate is an interval, which we denote by $[b_i, c_i]$. Let $B_i$ be a box in $\cal{A}\cup B$ which achieves $b_i$ as a left endpoint when projected to the $i$-th coordinate. Similarly, let $C_i$ be a box from this collection which achieves $c_i$ as a right endpoint when projected to the $i$-th coordinate. 
 
 Consider the subfamily $\cal{A}'=\{B_1,\dots,B_d,C_2,\cdots,C_d\}$ (note we are not using $C_1$). It may be the case that these sets are not all unique and hence $|\cal{A}'|<2d-1$. If this is the case, include additional arbitrary boxes from $\cal{A}\cup B$ into $\cal{A}'$ so that we have $|\cal{A}'|=2d-1$. By construction, $\cal{A}'$ is a colorful $(2d-1)$-tuple, and $\cap\cal{A}'$ contains only points of $S$ which are either in $(\cap\cal{A})\cap B$ or are lexicographically greater than all the points in $(\cap\cal{A})\cap B$, since their first coordinate must be larger than $c_1$. In particular, we have $f_i(\cap \cal{A}')=f_i((\cap\cal{A})\cap B)$ for all $i\in [n]$. All points of $S$ in $\cap\cal{A}\cap B$ are also in $\cap\cal{A}$. In particular $\cap\cal{A}\cap B$ must contain the first $n$ lexicographic points of $S$ of $\cap\cal{A}$, or else $f_n(\cal{A}')>f_n(\cal{A})$ which contradicts the maximality of $f_n(\cal{A})$. This implies that $f_1(\cal{A}),\cdots,f_n(\cal{A})\in B$ and therefore $f_1(\cal{A}),\cdots,f_n(\cal{A})\in \cap\cal{B}_{2d}$. Thus $|\cap\cal{B}_{2d}\cap S|\geq n$.
\end{proof}

Theorem \ref{thm: colorful quantitative halman} is both a colorful and quantitative in nature.
By setting all the color classes equal to each other $\cal{B}_1=\cal{B}_2=\cdots=\cal{B}_{2d}$ we obtain the following monochromatic version.

\begin{corollary}
    Given a family $\cal{B}\subseteq\R^d$ of axis parallel boxes and $S\subseteq\R^d$ a discrete set, if every choice of $2d$ boxes from $\cal{B}$ contain $n$ points of $S$ in their intersection, then $\cap\cal{B}$ contains $n$ points of $S$.
\end{corollary}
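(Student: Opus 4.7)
The plan is to derive the corollary as a direct specialization of \cref{thm: colorful quantitative halman}, obtained by setting all color classes equal. Concretely, take $\cal B_1 = \cal B_2 = \cdots = \cal B_{2d} = \cal B$. The colorful theorem then guarantees an index $l$ with $|\bigcap \cal B_l \cap S| \ge n$, and since each $\cal B_l$ is just $\cal B$, this yields $|\bigcap \cal B \cap S| \ge n$, which is exactly the conclusion of the corollary. So the proof reduces entirely to checking that the colorful hypothesis is satisfied.

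The one point requiring a moment of care is translating the monochromatic hypothesis into the colorful one. A transversal $(B_1, \dots, B_{2d})$ with $B_i \in \cal B_i = \cal B$ is really a multiset of boxes from $\cal B$ with possible repetitions, whose support has some size $k \le 2d$. The intersection $\bigcap_{i=1}^{2d} B_i$ equals the intersection of this support, and is therefore the intersection of some subfamily of $\cal B$ of size at most $2d$. By intersection monotonicity, this set contains the intersection of any $2d$-subset of $\cal B$ extending the support, and the latter intersection carries at least $n$ points of $S$ by hypothesis (the edge case $|\cal B| < 2d$ is either vacuous or immediate, depending on how one reads ``every choice of $2d$ boxes''). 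Hence the hypothesis of \cref{thm: colorful quantitative halman} is met and the conclusion follows.

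I do not anticipate any substantive obstacle here. The corollary is genuinely a specialization of the preceding theorem, and all of the combinatorial and lexicographic heavy lifting has already been carried out in the proof of \cref{thm: colorful quantitative halman}; what remains is the bookkeeping above.
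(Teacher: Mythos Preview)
Your proposal is correct and matches the paper's own derivation exactly: the corollary is obtained by setting $\cal{B}_1=\cal{B}_2=\cdots=\cal{B}_{2d}=\cal{B}$ in \cref{thm: colorful quantitative halman}. Your extra care in verifying that the colorful hypothesis is met even when the transversal has repeated boxes is a welcome detail that the paper leaves implicit.
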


This quantitative result is an extension of a result by Halman who proved the $n=1$ case in \cite{Halman2008}. Our work provides an alternate proof of Halman's original theorem.

The version above also has the following implication for quantitative Helly for boxes.

\begin{corollary}\label{cor:quantitative-halman}
    Let $\mu$ be a finite measure in $\rr^d$, absolutely continuous with respect to the Lebesgue measure.  Let $\cal{B}$ be a finite family of axis-parallel boxes.  If the intersection of any $2d$ boxes of $\mathcal{B}$ has $\mu$-measure at least $1$, then $\mu\left(\bigcap \cal{B}\right) \ge 1$.
\end{corollary}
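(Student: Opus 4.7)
The plan is to reduce this to the monochromatic corollary of \cref{thm: colorful quantitative halman} by a discretization argument. Since $\cal B$ is finite, the collection $\cal D$ of axis-parallel boxes arising as $\bigcap_{i\in I} B_i$ for some $I \subseteq \cal B$ is also finite. For every $\varepsilon > 0$ I will produce a positive integer $N$ and a finite set $S \subset \rr^d$ of distinct points such that
\[
\left| \frac{|D \cap S|}{N} - \mu(D) \right| < \varepsilon \qquad \text{for every } D \in \cal D.
\]
Granted this, the hypothesis $\mu(D_0) \ge 1$ for every $2d$-fold intersection $D_0$ of boxes of $\cal B$ forces $|D_0 \cap S| \ge (1-\varepsilon) N$. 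Setting $n = \lceil (1-\varepsilon) N \rceil$ and applying the monochromatic corollary of \cref{thm: colorful quantitative halman} (the one stated immediately after its proof) to the family $\cal B$ and the discrete set $S$ yields $\bigl| \bigcap \cal B \cap S \bigr| \ge n$, whence
\[
\mu\left(\bigcap \cal B\right) \ge \frac{n}{N} - \varepsilon \ge 1 - 2\varepsilon.
\]
Letting $\varepsilon \to 0$ gives the claim.

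The construction of $S$ is the technical step. I would first fix a large cube $Q$ with $\mu(\rr^d \setminus Q) < \varepsilon$, then partition $Q$ into a regular grid of small axis-parallel cubes $\{C_j\}$ whose mesh is fine enough that, for every $D \in \cal D$, the cubes meeting $\partial D$ have total $\mu$-measure less than $\varepsilon$. This is possible because $\mu$ is absolutely continuous and hence assigns zero mass to the boundaries of the finitely many boxes in $\cal D$. Choosing $N$ large and placing $\lfloor N \mu(C_j) \rfloor$ distinct points in the interior of each $C_j$ (slightly perturbed if necessary so that no two coincide, but staying inside their respective $C_j$) yields a set $S$ for which $|D \cap S|/N$ differs from $\mu(D)$ by at most a constant times $\varepsilon$ for every $D \in \cal D$, uniformly.

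The main obstacle is exactly this uniform simultaneous approximation: one must transfer a measure-theoretic hypothesis into a discrete counting hypothesis that holds for the whole finite collection $\cal D$ at once. Absolute continuity of $\mu$ is the essential ingredient making this possible and is precisely where that assumption of the corollary is used; everything else is a direct invocation of the quantitative monochromatic Halman theorem established above.
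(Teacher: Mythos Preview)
Your proposal is correct and takes essentially the same route as the paper: discretize $\mu$ by a finite point set so that counting points approximates $\mu$-measure uniformly over all intersections of subfamilies of $\cal B$, apply the monochromatic quantitative Halman corollary, and let $\varepsilon\to 0$. The only difference is cosmetic: the paper simply asserts the existence of approximating point sets $S_n$ with $|S_n\cap(\bigcap\cal B')|/|S_n|\to\mu(\bigcap\cal B')/\mu(\rr^d)$ and runs the limit, while you spell out the grid-and-floor construction and identify explicitly where absolute continuity (vanishing of $\mu$ on the boundaries of the finitely many boxes in $\cal D$) is used.
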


\begin{proof}
    As any measure can be approximated by finite sets of points, consider a sequence $S_n$ of finite sets of $\rr^d$ such that for each $\cal{B}' \subset \cal{B}$ we have
    \[
\lim_{n \to \infty} \frac{\left|S_n \cap (\bigcap \cal{B}')\right|}{|S_n|} \to \frac{\mu(\cal{B}')}{\mu(\rr^d)} .
    \]

    Let $\varepsilon>0$.  There exists a natural number $N$ such that for $n>N$ we have that for every $2d$-tuple $\cal{B}'\subset \cal{B}$ we have 
\begin{align*}
\frac{\left|S_n \cap (\bigcap \cal{B}')\right|}{|S_n|} & > \frac{\mu(\cap \cal{B}')}{\mu(\rr^d)} - \varepsilon \ge \frac{1}{\mu(\rr^d)} - \varepsilon \\
\left|S_n \cap (\bigcap \cal{B}')\right| & > |S_n| \dot {\left( {\frac{1}{\mu(\rr^d)}} - \varepsilon \right)}
\end{align*}

Applying \cref{cor:quantitative-halman}, we have that 
\begin{align*}
    \left|S_n \cap (\bigcap \cal{B})\right| & \ge |S_n| \dot {\left( {\frac{1}{\mu(\rr^d)}} - \varepsilon \right)} \\
    \frac{\left|S_n \cap (\bigcap \cal{B})\right|}{|S_n|} & \ge {\frac{1}{\mu(\rr^d)}} - \varepsilon.
\end{align*}

As $n \to \infty$, this implies that 
\[
\frac{\mu(\cap \cal{B})}{\mu(\rr^d)} \ge \frac{1}{\mu(\rr^d)},
\]
giving us the conclusion we wanted.
\end{proof}

This corollary is also a direct consequence of the quantitative Helly theorem with boxes of Sarkar, Xue, and Sober\'on \cite{Sarkar2021}.  However, the proof presented here does not rely on the Brunn--Minkowski inequality, so it shows an alternate way to achieve this result.

\subsection{Proof using a Radon-style theorem}

\begin{definition}
For $X\subseteq\R^d$ let $\bbox(X)$ denote the intersection of all axis-parallel boxes which contain $X$ as a subset.
\end{definition}

The following lemma is reminiscent of Radon's theorem for convex hull.  Notice that the partition induced on the set always has one part as a singleton.

\begin{lemma}\label{lemma:Radon-box}
    Let $X\subseteq\R^d$ be a set such that $|X|\geq 2d+1$. Then their exists $x\in X$ such that $x\in \bbox(X\setminus\{x\})$.
\end{lemma}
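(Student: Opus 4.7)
My plan is to observe that for any finite set $Y \subseteq \R^d$, the set $\bbox(Y)$ is simply the coordinate-wise bounding box
\[
\bbox(Y) = \prod_{i=1}^d \bigl[\min_{y \in Y} y_i,\ \max_{y \in Y} y_i\bigr],
\]
since this product is an axis-parallel box containing $Y$, and every axis-parallel box containing $Y$ must contain it. In particular, $\bbox(X \setminus \{x\}) = \bbox(X)$ whenever removing $x$ does not change the minimum or maximum of $X$ in any coordinate, and in that case $x \in \bbox(X) = \bbox(X \setminus \{x\})$.

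Next I would classify the points of $X$ whose removal \emph{does} alter the bounding box. For each coordinate $i \in [d]$, at most one point can be the unique attainer of $\min_{y \in X} y_i$, and at most one can be the unique attainer of $\max_{y \in X} y_i$. Call these (at most two) points the \emph{$i$-extremal} points of $X$; points that tie for the min or max in coordinate $i$ contribute nothing, since they can safely be removed. The union of $i$-extremal points over all $i \in [d]$ has size at most $2d$.

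Since $|X| \geq 2d + 1$, the pigeonhole principle yields a point $x \in X$ which is not $i$-extremal for any $i$. By construction, for every coordinate $i$ the min and max of $X$ and of $X \setminus \{x\}$ in coordinate $i$ coincide, so $\bbox(X) = \bbox(X \setminus \{x\})$. Since $x \in X \subseteq \bbox(X)$, this gives $x \in \bbox(X \setminus \{x\})$, as required.

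There is no real obstacle here; the whole argument is a counting/pigeonhole exercise once one identifies the right description of $\bbox(\cdot)$. The mild subtlety is to remember that a point tied with another for an extreme value in some coordinate does not need to be counted, which is exactly why the bound $2d$ (rather than something larger) suffices and why the threshold $|X| \geq 2d+1$ is the right one.
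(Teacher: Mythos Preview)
Your proof is correct and follows essentially the same approach as the paper: both identify a set of at most $2d$ coordinate-extremal points and use pigeonhole to find a point $x$ outside that set, from which $x \in \bbox(X \setminus \{x\})$ follows immediately. Your treatment of ties (counting only \emph{unique} attainers) is a slight refinement, but the paper achieves the same effect by simply fixing one witness $a_i,b_i$ per coordinate and noting $\bbox(\{a_1,b_1,\dots,a_d,b_d\})\subseteq\bbox(X\setminus\{x\})$.
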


\begin{proof}
    If $X$ is infinite, consider only a finite number of points of $X$. Let $a_i\in X$ be a point with the minimum $i$-th coordinate. Similarly, let $b_i$ be a point with the maximum $i$-th coordinate. Then $\bbox(X)=\bbox(\{a_1,b_1,\cdots,a_d,b_d\})$. As $|X|\geq 2d+1$, there exists $x\in X$ such that $x\notin\{a_1,b_1,\cdots,a_d,b_d\}$. Thus $x\in\bbox(X)=\bbox(\{a_1,b_1,\cdots,a_d,b_d\})\subseteq\bbox(X\setminus\{x\})$.
\end{proof}

\begin{remark}
    The set $\{ \pm e_1$ , $\dots,\pm e_d\}$ where $e_i$ is the standard basis vector in dimension $i$ shows that \cref{lemma:Radon-box} is sharp.
\end{remark}

The above lemma is essentially a discrete version of radon's lemma for boxes. In 2023, Breen defined a function $r(d)$ such that any set $X\in\R^d$ with $|X|\geq r(d)$, admits a partition into two sets $X_1,X_2$ such that $\bbox(X_1)\cap\bbox(X_2)\neq\emptyset$. Our above lemma is a discrete version of Breen's lemma. We show that $|X|\geq 2d+1$ is the minimum number of points required to guarantee a partition into two sets $X_1=\{x\},X_2=X\setminus\{x\}$ whose box hulls intersect and contain a point from the original set in their intersection. That is $\bbox(X_1)\cap \bbox(X_2)\cap X\neq\emptyset$.

Next we provide an alternate proof of Halman's Theorem. This proof is based off induction, and it closely mirrors Radon's proof of Helly's Theorem. This proof requires a finite family of boxes, but it also allows $S$ to be any subset of $\R^d$, not necessarily discrete.

\begin{theorem}
    Let $\mathcal{B}$ be a finite family of axis parallel boxes in $\R^d$, and consider any set $S\subset\R^d$. If every subfamily of $2d$ or fewer boxes in $\mathcal{B}$ contains a point of $S$ in its intersection, then $\left(\bigcap\mathcal{B}\right) \cap S\neq\emptyset$.
\end{theorem}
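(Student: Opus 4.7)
The plan is to induct on $m = |\mathcal{B}|$, mimicking Radon's classical proof of Helly's theorem but with \cref{lemma:Radon-box} playing the role of Radon's partition lemma. The base case $m \le 2d$ is immediate from the hypothesis. For the inductive step, assume $m \ge 2d+1$ and that the statement holds for families of strictly fewer boxes. Since every $2d$-tuple of $\mathcal{B} \setminus \{B_i\}$ is also a $2d$-tuple of $\mathcal{B}$, the subfamily $\mathcal{B} \setminus \{B_i\}$ satisfies the hypothesis, so by induction there is a point $p_i \in S$ with $p_i \in \bigcap_{k \ne i} B_k$. If some $p_i$ also happens to lie in $B_i$, then $p_i \in S \cap \bigcap \mathcal{B}$ and we are done.

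Otherwise, I first observe that the points $p_1, \dots, p_m$ must be pairwise distinct. Indeed, if $p_i = p_j$ with $i \ne j$, then $p_j$ lies in every $B_k$ with $k \ne j$, including $B_i$, so $p_i = p_j \in B_i$, contradicting the assumption that no $p_i$ lies in its own $B_i$. Thus $X = \{p_1, \dots, p_m\}$ has $|X| = m \ge 2d+1$, and \cref{lemma:Radon-box} produces an index $j$ with $p_j \in \bbox(X \setminus \{p_j\})$. For each $i \ne j$ we have $p_i \in B_j$ (again because $p_i$ lies in every box except possibly $B_i$, and $j \ne i$), so $X \setminus \{p_j\} \subseteq B_j$. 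Since $B_j$ is itself an axis-parallel box containing this set, it appears in the intersection defining $\bbox(X \setminus \{p_j\})$, giving $\bbox(X \setminus \{p_j\}) \subseteq B_j$ and therefore $p_j \in B_j$. Combined with $p_j \in \bigcap_{k \ne j} B_k$, this yields $p_j \in S \cap \bigcap \mathcal{B}$, completing the induction.

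The only mildly delicate point is ensuring that \cref{lemma:Radon-box} actually applies, i.e., that the $m$ witness points are genuinely distinct. As argued above, this is automatic: a coincidence $p_i = p_j$ immediately forces $p_i$ into $B_i$ via the property of $p_j$, short-circuiting the argument; otherwise we have $m \ge 2d+1$ distinct points in $\R^d$ and the lemma supplies the crucial box-hull containment. No finiteness or discreteness assumption on $S$ is needed, since the induction only uses the existence of a single point of $S$ in the relevant intersections.
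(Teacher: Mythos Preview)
Your proof is correct and follows essentially the same approach as the paper: induct on $|\mathcal{B}|$, use the inductive hypothesis to produce witness points $p_i \in S \cap \bigcap_{k\neq i} B_k$, observe that a coincidence among them already finishes the argument, and otherwise apply \cref{lemma:Radon-box} to the (at least $2d+1$) distinct witnesses to force some $p_j$ into $B_j$. The only difference is cosmetic---you split on whether some $p_i$ lies in its own $B_i$ and then deduce distinctness, whereas the paper splits directly on whether two witnesses coincide---but the logic is identical.
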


\begin{proof}
    If $|\mathcal{B}|\leq 2d$, then the statement is true. Suppose that the theorem holds for all families of boxes of size $n$ for some $n \ge 2d$. Consider a family of boxes $\mathcal{B}$ such that $|\mathcal{B}|= n+1$.
    By the inductive hypothesis, there must be a point of $S$ in the intersection of every $n$-tuple of boxes in $\mathcal{B}$. 
    
    For each $B_i \in \cal{B}$, let $s_i\in\left( \bigcap(\mathcal{B}\setminus B_i)\right)\cap S$. If $s_i=s_j$ for some $i\neq j$, then we are done. Suppose otherwise. Then $|\{s_1,\cdots,s_{n+1}\}|\geq 2d+1$. Hence by \cref{lemma:Radon-box}, there exists $s_k$ such that $s_k\in\bbox(\{s_i:i\neq k\})$. But $\{s_i:i\neq k\}\subseteq B_k$. Therefore we have $\bbox(\{s_i:i\neq k\})\subseteq B_k$ which implies $s_k\in B_k$. Recall by definition, that $s_k\in\left(\bigcap( \mathcal{B}\setminus B_k)\right)\cap S$, so $s_k\in\left(\bigcap\mathcal{B}\right)\cap S$.
\end{proof}

This shows that \cref{lemma:Radon-box} implies Halman's theorem.  The reverse implication also holds.

\begin{proposition}
    Lemma \ref{lemma:Radon-box} is a direct consequence of Halman's theorem.
\end{proposition}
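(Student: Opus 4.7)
My plan is to prove the lemma by contradiction, using Halman's theorem as a black box. I would assume $|X| \geq 2d+1$ and that no point of $X$ lies in the $\bbox$-hull of the remaining points, then manufacture a family of axis-parallel boxes that satisfies Halman's hypothesis while violating its conclusion. First I would reduce to the case where $X$ is finite by passing to any $(2d+1)$-element subset $X' \subseteq X$; monotonicity of $\bbox$ gives $\bbox(X' \setminus \{x\}) \subseteq \bbox(X \setminus \{x\})$ for each $x \in X'$, so the contradiction hypothesis still holds on $X'$, and I may assume $|X| = 2d+1$.

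Next, for each $x \in X$ I would set $B_x := \bbox(X \setminus \{x\})$. By construction $B_x$ is an axis-parallel box containing every element of $X$ except $x$, and by the contradiction hypothesis $x \notin B_x$. Take $S := X$, a finite subset of $\R^d$, and consider the family $\mathcal{F} := \{B_x : x \in X\}$. For any $2d$ boxes $B_{x_1}, \dots, B_{x_{2d}}$ from $\mathcal{F}$, the single leftover point in $X \setminus \{x_1, \dots, x_{2d}\}$ lies in every $B_{x_i}$ and belongs to $S$, so every $2d$-tuple of $\mathcal{F}$ meets $S$.

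Applying Halman's theorem (\cref{thm:halman}) then produces a point $z \in S \cap \bigcap_{x \in X} B_x$. Since $z \in S = X$, this forces $z \in B_z$, contradicting the defining property of $B_z$. I expect the argument to be very short and to present no real technical obstacle; the only point requiring a touch of care is the reduction to finite $X$ (needed because Halman's theorem applies to finite families), and the key insight is simply that the $\bbox$-hulls of singleton complements are the right family to feed into Halman's theorem.
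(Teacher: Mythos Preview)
Your proposal is correct and follows essentially the same approach as the paper: both define $B_x = \bbox(X\setminus\{x\})$, take $S = X$, verify the $2d$-tuple hypothesis by counting leftover points, and apply Halman's theorem to produce a point $x_k \in B_k$. The only cosmetic differences are that you frame it as a contradiction and add an explicit reduction to a $(2d+1)$-element subset (which the paper omits, simply opening with ``consider a finite point set''); your monotonicity step handling the infinite case is a welcome bit of extra care.
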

\begin{proof}
 Consider a finite point set $X=\{x_1,\cdots,x_n\}$ with $n\geq 2d+1$. Let $B_i=\bbox(X\setminus\{x_i\})$. Let $\mathcal{B}=\{B_i:i\in[n]\}$. As each $B_i$ contains all but one of the points in $X$, any subfamily of $\mathcal{B}$ of size $2d$ must contain at least one point of $X$ because $|X|\geq 2d+1$. Therefore any subfamily of size $2d$ contains a point of $X$ in its intersection. Thus by Halman's Theorem $\cap\mathcal{B}$ contains a point $x_k\in X$ in its intersection. But $x_k\in \cap\mathcal{B}$ implies $x_k\in B_k=\bbox(X\setminus\{x_k\})$.
\end{proof}

The discrete nature of these results, which involves the set $S$, allows us to prove some versions which have no non-discrete analogue.  Consider the following ``very colorful'' quantitative Halman.  The key observation is that we can replace $S$ by several sets $S_1,\dots, S_m$, and we can prescribe how many points we want to contain of each set.

\begin{theorem}[Very colorful Halman]\label{thm:very-colorful-quantitative-halman}
    Let $\cal{B}_1,\cal{B}_2,\cdots,\cal{B}_{2d}$ be finite families of axis parallel boxes in $\R^d$. Let $m$ be a positive integer, Let $S_1, \dots, S_m \subseteq\rr^d$ be a discrete sets and $n_1, \dots, n_m$ be a positive integers. Suppose that for every choice of  $B_i\in\cal{B}_i$ for each $i \in [2d]$ we have $\left|\bigcap_{i=1}^{2d}B_i\cap S_j\right|\geq n_j$ for each $j \in [m]$.
    Then there exists an index $l\in[n]$ such that the intersection of $\cal{B}_l$ contains $n_j$ points of $S_j$ for all $j \in [m]$.  In other words, $|\cap\cal{B}_l\cap S_j|\geq n_j$.
\end{theorem}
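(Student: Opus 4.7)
The plan is to adapt the argument of \cref{thm: colorful quantitative halman} by replacing the scalar functional $f_n$ with a vector-valued functional that simultaneously tracks, for each $j\in[m]$, the $n_j$-th lex-smallest point of $S_j$. Concretely, for a colorful $(2d-1)$-tuple $\cal{A}$ define $f_j(\cal{A})$ to be the $n_j$-th lex-smallest point of $S_j$ inside $\bigcap\cal{A}$; this is well-defined because any completion of $\cal{A}$ to a colorful $2d$-tuple by adjoining a box from the missing color class satisfies the hypothesis, so $\left|\bigcap\cal{A}\cap S_j\right|\geq n_j$. Set $F(\cal{A})=(f_1(\cal{A}),\dots,f_m(\cal{A}))\in\rr^{dm}$ and pick a colorful $(2d-1)$-tuple $\cal{A}$ for which $F(\cal{A})$ is lex-maximal, assuming without loss of generality that $\cal{A}$ omits $\cal{B}_{2d}$. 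The claim will be that $l=2d$ is the desired index.

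To prove this, for each $B\in\cal{B}_{2d}$ I would invoke the construction of the auxiliary colorful $(2d-1)$-tuple $\cal{A}'$ used in the proof of \cref{thm: colorful quantitative halman}. That construction produces $\cal{A}'$ with $\bigcap\cal{A}'\supseteq\bigcap\cal{A}\cap B$ and with every point of $\bigcap\cal{A}'\setminus(\bigcap\cal{A}\cap B)$ having first coordinate strictly larger than the first coordinate of every point of $\bigcap\cal{A}\cap B$. Since $\cal{A}\cup\{B\}$ is a colorful $2d$-tuple, $\left|\bigcap\cal{A}\cap B\cap S_j\right|\geq n_j$, and the same argument as in \cref{thm: colorful quantitative halman} yields $f_j(\cal{A}')=f_j\!\left(\bigcap\cal{A}\cap B\right)$, where the right-hand side denotes the $n_j$-th lex-smallest point of $S_j$ inside $\bigcap\cal{A}\cap B$. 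Combined with $\bigcap\cal{A}\cap B\subseteq\bigcap\cal{A}$, which can only push the $n_j$-th lex-smallest point of $S_j$ forward in lex order, this gives $f_j(\cal{A}')\succeq_{\mathrm{lex}} f_j(\cal{A})$ for every $j\in[m]$.

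The final step is to combine these coordinatewise inequalities with the lex-maximality $F(\cal{A}')\preceq_{\mathrm{lex}} F(\cal{A})$: a short induction on $j$ forces equality $f_j(\cal{A}')=f_j(\cal{A})$ for every $j$, and then a simple counting argument (both $\bigcap\cal{A}$ and $\bigcap\cal{A}\cap B$ contain exactly $n_j$ points of $S_j$ at or below this common value, and the latter set of points is a subset of the former) forces the two $n_j$-element sets to coincide. In particular the $n_j$ lex-smallest points of $S_j$ in $\bigcap\cal{A}$ all lie in $B$, and since $B\in\cal{B}_{2d}$ was arbitrary they lie in $\bigcap\cal{B}_{2d}$, so $l=2d$ satisfies $\left|\bigcap\cal{B}_{2d}\cap S_j\right|\geq n_j$ for every $j$.

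The subtle point, and the main technical step to verify, is that the construction of $\cal{A}'$ is inherently asymmetric (it privileges the first coordinate), yet because lex-maximality of a vector-valued functional forces equality coordinate by coordinate, a single tuple $\cal{A}$ suffices to handle all $m$ sets $S_j$ at once. Everything else reduces cleanly to the argument already developed for \cref{thm: colorful quantitative halman}.
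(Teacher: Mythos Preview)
Your proposal is correct and follows essentially the same line as the paper's proof: both pick a colorful $(2d-1)$-tuple $\mathcal{A}$ maximizing an aggregate of the functionals $f_j$ (for $j\in[m]$), then use the same auxiliary tuple $\mathcal{A}'$ from the proof of \cref{thm: colorful quantitative halman} to force equality in each coordinate. The only difference is the aggregation device: the paper labels the points of each $S_j$ by integers and maximizes the scalar $\sum_{j} f^j_{n_j}$, whereas you keep the points in $\rr^d$ and maximize the vector $F(\mathcal{A})=(f_1(\mathcal{A}),\dots,f_m(\mathcal{A}))$ under lex order on $\rr^{dm}$. Both work for the same reason (coordinatewise monotonicity of $f_j$ under passage from $\mathcal{A}$ to $\mathcal{A}'$), and your version has the small advantage of not needing the $S_j$ to be finite, only discrete, in line with the theorem's hypotheses.
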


A direct application of \cref{thm: colorful quantitative halman} for each $S_j$ gives us a similar result, but we have no guarantee that the family $\cal{B}_l$ will be the same for all values of $j$.

\begin{proof}
The proof follows the same idea as the proof of \cref{thm: colorful quantitative halman}.  Order the set $S_j$ lexicographically.  We identify the elements of $S$ with the integers in $[|S_j|]$.  We call a subfamily $\cal{F}\subseteq\cal{B}_1\cup\cdots\cup\cal{B}_{2d}$ colorful if no two boxes in $\cal{F}$ come from the same indexed set $\cal{B}_i$. For each $j \in [m]$ and $k \in [n_j]$, consider a function $f^j_k$ from the set of all colorful $(2d-1)$-tuples to the set $[|S_j|]$. The function $f^j_k$ assigns to each colorful $(2d-1)$-tuple the label of the $k$-th smallest point of $S_j$, under the lexicographic order, which is contained in the intersection of this tuple. As $S_j$ is finite and each colorful $(2d-1)$-tuple must contain at least $n_j$ points of $S_j$, the function $f^j_k$ is well defined.  

Since there is a finite number of possible $(2d-1)$-tuples, there is a $(2d-1)$-tuple $\cal{A}$ on which $f:=\sum_{j=1}^m f^j_{n_j}$ attains its maximal value. 

Without loss of generality, assume that $\cal{A}$ does not have a set of $\cal{B}_{2d}$ and $\cal{A}=\{A_1,\cdots,A_{2d-1}\}$ where $A_i\in\cal{B}_i$ for each $i \in [2d-1]$. Consider a set $B\in\cal{B}_{2d}$. We claim that the points in $S_j$ corresponding to $f^j_1(\cal{A}),\dots,f^j_{n_j}(\cal{A})$ are contained in $B$. The intersection $(\cap\cal{A})\cap B$ is also an axis-parallel box, so its projection to the $i$-th coordinate is an interval, which we denote by $[b_i, c_i]$. Let $B_i$ be a box in $\cal{A}\cup B$ which achieves $b_i$ as a left endpoint when projected to the $i$-th coordinate. Similarly, let $C_i$ be a box from this collection which achieves $c_i$ as a right endpoint when projected to the $i$-th coordinate. 
 
 Consider the subfamily $\cal{A}'=\{B_1,\dots,B_d,C_2,\cdots,C_d\}$ (note we are not using $C_1$). It may be the case that these sets are not all unique and hence $|\cal{A}'|<2d-1$. If this is the case, include additional arbitrary boxes from $\cal{A}\cup B$ into $\cal{A}'$ so that we have $|\cal{A}'|=2d-1$.  As in the proof of \cref{thm: colorful quantitative halman}, the value of $f^j_i$ in $\cal{A}'$ can only be larger than or equal to the one in $\cal{A}$.  If we didn't have the same initial $n_j$-tuple of points as in $\cal{A}$, we would contradict the maximaliy of $\cal{A}$.  This implies that $B$ contains the $n_j$ points of $S_j$ we need.
\end{proof}

\section{The topology of Halman's theorem}\label{sec:topological}

One approach to study Helly's theorem is to construct simplicial complexes associated to families of convex sets.  For Helly's theorem, the nerve complex of a family $\mathcal{F}$ of convex sets provides this link.  Given a family $\mathcal{F}$ of sets, this is a simplicial complex with a vertex for each set of $\mathcal{F}$ and a face for each set of vertices that corresponds to an intersecting subfamily.

For a face $\sigma$ in a simplicial complex $K$, we define the dimension of $\sigma$ as $|\sigma|-1$.

One of the key operations on simplicial complexes are collapses.  Given a simplicial complex $K$ and an integer $m$, an $m$-collapse is a modification of $K$ to a  new simplicial complex $K'$ such that there following properties are satisfied:
\begin{itemize}
    \item There is a face $\sigma$ of $K$ that is contained in a unique inclusion-maximal face $\eta$ of $K$.
    \item The face $\sigma$ has dimension at most $m-1$.
    \item The complex $K'$ is formed by removing $\sigma$ and all faces that contain $\sigma$.
    \[
    K' = K \setminus{\tau : \sigma \subset \tau}.
    \]
\end{itemize}

We say that a complex $K$ is $m$-collapsible if there is a sequence of $m$-collapses that starts with $K$ and ends with the empty complex.  A lot of properties of finite families of convex sets follow from the fact that the nerve complex of a finite family of convex sets in $\rr^d$ is $(d-1)$-collapsible \cite{Tancer:2013iz}.

Now, given a finite family $\mathcal{B}$ of axis-parallel boxes in $\rr^d$ and a finite set $S \subset \rr^d$, we can create a simplicial complex $K(\cal{B}, S)$ as follows:
\begin{itemize}
    \item We include a vertex in $K(\cal{B}, S)$ of each box $B \in \cal{B}$ that contains a point in $S$.
    \item A set of vertices of $K(\cal{B}, S)$ forms a face if an only if the intersection of their corresponding boxes contains a point of $S$.
\end{itemize}

\begin{theorem}\label{thm:collapse-halman}
    If $\mathcal{B}$ is a finite family of axis-parallel boxes in $\rr^d$ and $S \subset \rr^d$ is finite, then $K(\cal{B}, S)$ is $(2d-1)$-collapsible.
\end{theorem}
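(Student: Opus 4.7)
The argument proceeds by induction, showing that $K(\mathcal{B}, S)$ always admits a $(2d-1)$-collapse that strictly reduces the complex, until the empty complex is reached. The key structural observation is that every inclusion-maximal face of $K(\mathcal{B}, S)$ has the form $V_p = \{B \in \mathcal{B} : p \in B\}$ for some witness $p \in S$: any maximal face must contain a witness in its common intersection, and then maximality forces the face to equal $V_p$.

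For any maximal face $V_p$, I introduce the \emph{constraining subfamily} $\sigma_p = \{L_1, R_1, \ldots, L_d, R_d\} \subseteq V_p$, where $L_i$ and $R_i$ are boxes in $V_p$ realizing respectively the largest left endpoint and the smallest right endpoint of $\bigcap V_p$ in coordinate $i$. Then $|\sigma_p| \leq 2d$ and $\bigcap \sigma_p = \bigcap V_p$. A direct witness-forcing argument shows $\sigma_p$ is a free face of $K$ with unique maximal coface $V_p$: any $\tau \supseteq \sigma_p$ in $K$ has a witness $q \in \bigcap \tau \cap S \subseteq \bigcap V_p \cap S$, so $V_q \supseteq V_p$, and the maximality of $V_p$ forces $V_q = V_p$, putting $\tau$ inside $V_p$. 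When $|\sigma_p| \leq 2d - 1$ this immediately supplies the required $(2d-1)$-collapse, removing the interval $[\sigma_p, V_p]$.

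The harder case is $|\sigma_p| = 2d$, meaning the constraining boxes are all distinct. Here I drop from $\sigma_p$ one carefully chosen extremal box $B^{*}$ to obtain $\sigma_p^{-} \subseteq V_p$ of size $2d - 1$. The intersection $\bigcap \sigma_p^{-}$ extends $\bigcap V_p$ in a single direction (say, to the left in coordinate $i$ if $B^{*} = L_i$). The crucial property to verify is that no $S$-point lying in $\bigcap \sigma_p^{-} \setminus \bigcap V_p$ belongs to some $V_q \not\subseteq V_p$ — otherwise $\sigma_p^{-}$ would acquire a second maximal coface. Choosing $V_p$ with $p$ lexicographically minimal in $S$ among witnesses of maximal faces, and dropping $L_1$, makes this property follow from lex-minimality: any $S$-point in the extension would have first coordinate strictly less than $p_1$, which is impossible. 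When $p$ cannot be chosen globally lex-minimal, one instead picks the coordinate and side to drop by a Halman-flavored pigeonhole: among the $2d$ possible extension directions, at least one avoids all $S$-witnesses lying outside $\bigcap V_p$.

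Collapsing the interval $[\sigma_p^{-}, V_p]$ destroys the maximal face $V_p$, strictly reducing the complex, and iterating eventually exhausts $K(\mathcal{B}, S)$. The main obstacle is the $|\sigma_p| = 2d$ case, where one must rigorously verify existence of a safe extension direction — this requires tracking how boxes of $\mathcal{B}$ outside $V_p$ interact with the constraining subfamily and hinges on the fact that the $2d$ extremal boxes of $V_p$ fully capture the projection structure of $\bigcap V_p$, which is the same combinatorial input that drives Halman's original proof.
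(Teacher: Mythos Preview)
Your plan has a genuine gap at the iteration step. The argument you sketch for the \emph{first} collapse is essentially correct: if $p$ is lex-minimal among witnesses of maximal faces and you drop $L_1$, then any maximal face $V_{p'}$ containing $\sigma_p^{-}$ has $p'\in\bigcap\sigma_p^{-}$, and either $p'\in\bigcap V_p$ (forcing $V_{p'}=V_p$) or $p'$ lies in the leftward extension, whence $p'\prec p$ contradicts the choice of $p$. So $\sigma_p^{-}$ is indeed free. The problem is what happens next. After removing $[\sigma_p^{-},V_p]$ the resulting complex $K'$ is in general \emph{not} of the form $K(\mathcal{B}',S')$: it typically acquires new maximal faces of the shape $V_p\setminus\{x\}$ for $x\in\sigma_p^{-}$, which are not $V_q$ for any $q\in S$. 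Your opening ``structural observation'' that every maximal face is some $V_q$ therefore fails in $K'$, and you give no mechanism for producing a free face of size $\le 2d-1$ in such a complex. The claim ``iterating eventually exhausts $K(\mathcal{B},S)$'' is unsupported.

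The fallback ``Halman-flavored pigeonhole''---that among the $2d$ extension directions of $\bigcap V_p$ at least one avoids all $S$-points outside $\bigcap V_p$---is false already for $d=1$: take $\bigcap V_p$ an interval with $S$-points strictly to its left and strictly to its right, both reachable after dropping the corresponding constraining box. More to the point, this pigeonhole would be needed precisely in the regime (later iterations) where your lex-minimality argument no longer applies, and you have not justified it there.

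The paper avoids this difficulty by inducting on $|S|$ rather than on the complex. It takes $s_0$ to be the lexicographic \emph{maximum} of $S$ and, using the argument from the colorful Halman proof, produces $\sigma\subseteq V_{s_0}$ with $|\sigma|\le 2d-1$ and the stronger property $\bigcap\sigma\cap S=\{s_0\}$ (not merely that $V_{s_0}$ is the unique maximal coface). Because every face removed in the collapse on $\sigma$ has $s_0$ as its only $S$-witness, the collapse is compatible with deleting $s_0$ from $S$, and one lands back in a complex of the form $K(\mathcal{B},S\setminus\{s_0\})$ where the inductive hypothesis applies. The point you are missing is that controlling $\bigcap\sigma\cap S$ exactly, rather than just the maximal coface of $\sigma$, is what lets the induction close.
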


\begin{proof}
    We prove this by induction on $|S|$.  If $S = \emptyset$, then $K(\cal{B}, S) = \emptyset$, which is $(2d-2)$-collapsible.  Now assume that for each set $S'$ with $|S|-1$ elements, the complex $K(\cal{B}, S')$ is $(2d-1)$-collapsible.

    If there is a point $s \in S$ such that $s$ is not the unique element of $S$ in the intersection of some boxes of $\cal B$, then define $S'=S\setminus\{s\}$.  We have $K(\cal{B}, S)=K(\cal{B}, S')$ and we are done.
    
    Now assume that for each $s \in S'$, there is a family of boxes for which $s$ is the only point of $S$ in their intersection.  Let $s_0$ be the lexicographic maximum of $S$, and $\cal B _0 \subset \cal B$ be the set of boxes in $\cal B$ that contain $s_0$.  By construction, we know that $S \cap \left(\bigcap \cal B_0\right) =\{s_0\}$.  By the same arguments as in the proof of \cref{thm: colorful quantitative halman}, there exists a family $\cal B' \subset \cal B_0$ with at most $2d-1$ set such that the lexicographic minimum of $S \cap \left(\bigcap \cal B_0\right)$ and $S \cap \left(\bigcap \cal B'\right)$ are the same.  In this case, they can only be $\{s_0\}$.

    In other words, we have found a family $\cal B'$ with at most $(2d-1)$ whose intersection has a single point $s_0$ of $S$.  Let $\sigma$ be the corresponding face of $K(\cal{B}, S)$, of dimension at most $2d-2$.  Every face $\rho \supset \sigma$ must have points of $S$ in its intersection, and therefore its intersection with $S$ must be $\{s_0\}$.  This means that for the face $\eta$ corresponding to $\cal B_0$, we have $\eta \supset \rho \supset \sigma$.  In other words, $\eta$ is the unique inclusion-maximal face that contains $\sigma$.  We can do a collapse on $\sigma$, and obtain a new complex $K'$.
    
    The final observation is that $K' = K(\cal{B}, S\setminus\{s_0\})$, as every removed face had $s_0$ as their unique intersection with $S$.  By induction, $K'$ is $(2d-1)$-collapsible, so $K$ must be too.
\end{proof}

The reason collapsibility of our complexes is important is because $m$-collapsility of a complex gives bounds on its Leray number, and for complexes with bounded Leray numbers there are many Helly-type results known.  A complex $K$ is $m$-Leray if for every induced subcomplex $L$ of $K$, the reduced homology group $\tilde{H}_i(L)$ over $\mathds{Q}$ vanishes for $i \ge m$.  In particular, if $K$ is $m$-collapsible, then $K$ is $m$-Leray since each $m$-collapse does not affect $\tilde{H}_i(K)$ of $i \ge m$.

Therefore, we can apply directly the fractional Helly theorem for $m$-Leray complexes of Kalai \cite{Kalai:1984bg} and the colorful Helly theorem for $m$-Leray complexes of Kalai and Meshulam \cite{Kalai:2005tb}.  We include below the direct consequences of \cref{thm:collapse-halman}.

\begin{theorem}[Fractional Halman theorem for $2d$-tuples]\label{thm:fractional-strong}
    Let $d$ be a positive integer, $\alpha \in (0,1)$ and $\beta = 1- (1-\alpha)^{1/(2d-1)}$.  Let $\cal B$ be finite family of axis-parallel boxes in $\rr^d$ and $S \subset \rr^d$ be a finite set.  If at least $\alpha \binom{|\cal{B}|}{2d}$ of the $2d$-tuples of $\cal B$ contain a point of $S$ in their intersection, then there is a subfamily $\cal B' \subset \cal B$ of at least $\beta |\cal{B}|$ set of $\cal B$ whose intersection contains a point of $S$.
\end{theorem}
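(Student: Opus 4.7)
The plan is to deduce the statement as a direct application of Kalai's fractional Helly theorem for complexes of bounded Leray number \cite{Kalai:1984bg}, with the topological input supplied by \cref{thm:collapse-halman}.

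First, I would translate the geometric hypothesis into a statement about the complex $K(\cal{B},S)$. A $2d$-tuple of boxes whose intersection contains a point of $S$ corresponds precisely to a face of dimension $2d-1$ in $K(\cal{B},S)$; note that each of the $2d$ boxes in such a tuple individually contains a point of $S$, so it is automatically a vertex of the complex. The hypothesis thus asserts that $K(\cal{B},S)$ has at least $\alpha \binom{|\cal{B}|}{2d}$ faces of dimension $2d-1$.

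The mild technical point, and the only step that requires care, is that Kalai's theorem counts faces against $\binom{n}{2d}$ where $n$ is the number of vertices of the complex, whereas our hypothesis normalizes by $\binom{|\cal{B}|}{2d}$; these differ if some boxes in $\cal{B}$ contain no point of $S$. To reconcile them I would extend $K(\cal{B},S)$ to a complex $\hat{K}$ on the full vertex set $\cal{B}$ by adjoining one isolated vertex for every box meeting no point of $S$. For any induced subcomplex $L$ of $\hat K$, adjoining isolated vertices only splits off extra connected components, so it only affects $\tilde H_0(L)$ and leaves $\tilde H_i(L)$ unchanged for $i \geq 1$. Since $2d-1 \geq 1$, the complex $\hat K$ is still $(2d-1)$-Leray by \cref{thm:collapse-halman}, and no new $2d$-subsets become faces in the process.

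Finally, I would apply Kalai's fractional Helly theorem to $\hat K$, a $(2d-1)$-Leray complex on $|\cal{B}|$ vertices with at least $\alpha \binom{|\cal{B}|}{2d}$ faces of dimension $2d-1$. Kalai's theorem then produces a face of $\hat K$ of size at least $\bigl(1-(1-\alpha)^{1/(2d-1)}\bigr)|\cal{B}| = \beta |\cal{B}|$, and any such face corresponds to a subfamily $\cal{B}' \subseteq \cal{B}$ of the required size whose intersection contains a point of $S$. Essentially all the geometric content has been absorbed into \cref{thm:collapse-halman}, and the closed-form value of $\beta$ is inherited directly from Kalai's bound; the only genuine obstacle is the bookkeeping step of passing to $\hat K$ so that the denominator in the fractional hypothesis agrees with the vertex count used in Kalai's theorem.
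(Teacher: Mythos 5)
Your proposal is correct and follows essentially the same route as the paper: the paper also deduces \cref{thm:fractional-strong} directly from the $(2d-1)$-collapsibility (hence $(2d-1)$-Lerayness) of $K(\cal{B},S)$ established in \cref{thm:collapse-halman}, combined with Kalai's fractional Helly theorem for $m$-Leray complexes. Your extra bookkeeping step of adjoining isolated vertices for boxes missing $S$ is a reasonable clarification of a detail the paper leaves implicit, but it does not change the argument.
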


Note that as $\alpha \to 1$, we have $\beta \to 1$ as well.  This means that \cref{thm:fractional-strong} implies Halman's theorem.  For the theorem below, a matroid complex is a particular simplicial complex corresponding to the independent set of a matroid.

\begin{theorem}[Matroid colorful Halman]\label{thm:matroid-halman}
    Let $S$ be a finite set in $\rr^d$ and $M$ be a matroid complex with rank function $\rho$ and vertex set $V$.  For each $v$ in $V$, we are given an axis-parallel box $B_v$ in $\rr^d$.  For each independent set $U \subset V$, there is a point of $S$ in $\bigcap_{v \in U} B_v \subset \rr^d$.  Then, there exists a set $T \subset V$ such that $\rho (V \setminus T) \le 2d-1$ and such that there is a point of $S$ in $\bigcap_{v \in T} B_v$.
\end{theorem}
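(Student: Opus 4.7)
The plan is to derive this as a straightforward consequence of \cref{thm:collapse-halman} together with the matroid colorful Helly theorem of Kalai and Meshulam \cite{Kalai:2005tb}, which is exactly the tool designed for this kind of statement. Recall that their result says the following: if $K$ is a $d$-Leray simplicial complex on a ground set $V$, and $M$ is a matroid on $V$ with rank function $\rho$ such that every independent set of $M$ is a face of $K$, then there exists a face $\tau$ of $K$ with $\rho(V\setminus \tau)\le d$. (The classical colorful Helly theorem is the case where $M$ is a partition matroid.) So the task reduces to setting up the appropriate complex and checking that the two hypotheses hold.

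First, I would let $\cal B=\{B_v : v\in V\}$ and form the simplicial complex $K(\cal B,S)$ as defined in \cref{sec:topological}. By \cref{thm:collapse-halman}, $K(\cal B, S)$ is $(2d-1)$-collapsible, and collapsibility implies the same bound on the Leray number, so $K(\cal B, S)$ is $(2d-1)$-Leray. Next, I would verify the matroid hypothesis: for every independent set $U\subset V$ of $M$, the assumption guarantees a point of $S$ in $\bigcap_{v\in U} B_v$, which is precisely the condition that $U$ is a face of $K(\cal B, S)$. In particular every single vertex $v\in V$ is a face (an independent set of size at most one), so $V$ is the vertex set of $K(\cal B, S)$.

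With both hypotheses in place, I would invoke Kalai--Meshulam's matroid colorful Helly theorem with $K=K(\cal B,S)$ and parameter $d_{\mathrm{Leray}}=2d-1$. This produces a face $T$ of $K(\cal B, S)$ with $\rho(V\setminus T)\le 2d-1$. By the definition of $K(\cal B, S)$, the existence of $T$ as a face is equivalent to $\bigcap_{v\in T} B_v$ containing a point of $S$, which is the desired conclusion.

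The main obstacle, such as it is, lies not in the matroid step but upstream in \cref{thm:collapse-halman}; once collapsibility of $K(\cal B, S)$ is established, the matroid version of Halman becomes a direct black-box application of Kalai--Meshulam. The only subtleties to watch for are: (i) recording that every singleton is a face, so that the vertex set of $K(\cal B, S)$ really is all of $V$ (otherwise the rank function would need to be interpreted on a restricted ground set); and (ii) making sure the Leray parameter is tracked consistently as $2d-1$, so that the conclusion yields $\rho(V\setminus T)\le 2d-1$ rather than $2d$.
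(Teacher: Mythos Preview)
Your proposal is correct and matches the paper's approach exactly: the paper states \cref{thm:matroid-halman} as a direct consequence of \cref{thm:collapse-halman} combined with the Kalai--Meshulam colorful Helly theorem for $m$-Leray complexes \cite{Kalai:2005tb}, and you have spelled out precisely that derivation. Your additional care about singletons being faces and tracking the Leray parameter as $2d-1$ only makes the argument cleaner than the paper's one-line citation.
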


For example, we can consider $M$ a matroid complex on a set of vertices $V_1 \cup V_2 \cup \dots \cup V_{2d}$ and define the elements of $M$ as the sets $V$ with at most one element of $V_i$.  The rank $\rho$ of a set is just the number of sets $V_i$ it intersects.  For this matroid $M$, we recover \cref{thm: colorful quantitative halman} for $n = 1$.  The major upside is that we can use any other matroid $M$.

\section{Other fractional versions}\label{sec:fractional-and-beyond}

To prove \cref{thm:fractional-small}, we adapt the approach of B\'ar\'any and Matou\v{s}ek to Halman's theorem.  The main goal is that if we start with families $\cal B$ and $S$ that do not satisfy \cref{thm:fractional-small}, then we can construct a family $\cal B' \subset \cal B$ that contradicts the colorful Helly theorem (without involving $S$ any more).  We start with a technical definition and a lemma.

\begin{definition}
    Let $\varepsilon>0$ and $Z=Z_1\dot{\bigcup}\dots\dot{\bigcup} Z_r$ be a disjoint union of multisets (we allow repetition of elements). The multisets $Z_1,\cdots,Z_r$ are called $\varepsilon$-\textit{box-intermixed} if for every halfspace $H$ which is orthogonal to an element of the canonical basis, we have $|H\cap Z|\geq \varepsilon|Z|$ implies that $H\cap Z_j$ is nonempty for all $j\in[r]$.
\end{definition}

\begin{lemma}[Intermixing Box Lemma]\label{Lem: Box Intermix}
    Let $S_1,\cdots,S_r$ be finite point multisets in $\R^d$ that are $\frac{1}{2d}$-box-intermixed. Then 
    \[
    \bigcap_{j=1}^r\bbox(S_j)\cap S\neq\emptyset.
    \]
\end{lemma}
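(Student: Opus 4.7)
The plan is to argue by contradiction. Assume $\bigcap_{j=1}^r \bbox(S_j) \cap S = \emptyset$, where $S = S_1 \cup \cdots \cup S_r$. For each $j$, write $\bbox(S_j) = \prod_{i=1}^d [m_i^j, M_i^j]$, where $m_i^j$ and $M_i^j$ are the min and max $i$-th coordinates of points in $S_j$. Set
\[
a_i = \max_{j \in [r]} m_i^j, \qquad b_i = \min_{j \in [r]} M_i^j,
\]
so that $\bigcap_j \bbox(S_j) = \prod_{i=1}^d [a_i, b_i]$ whenever this product is nonempty. (If $a_i > b_i$ for some $i$, a similar argument still applies using the two competing indices.) The assumption that no point of $S$ lies in the common intersection means every $s \in S$ satisfies $s_i < a_i$ or $s_i > b_i$ for some coordinate $i$.

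Next I would invoke pigeonhole on the $2d$ halfspaces $H_i^- = \{x : x_i < a_i\}$ and $H_i^+ = \{x : x_i > b_i\}$ for $i \in [d]$, which cover $S$. One of them, say $H_i^-$, satisfies $|H_i^- \cap S| \ge |S|/(2d)$. Since $S$ is a finite multiset, the set $\{s_i : s \in S,\ s_i < a_i\}$ is finite, so I can let $t = \max\{s_i : s \in S,\ s_i < a_i\}$ and replace $H_i^-$ by the closed halfspace $H = \{x : x_i \le t\}$, which is orthogonal to $e_i$ and still contains at least $|S|/(2d) = \varepsilon |S|$ points of $S$.

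Now I would apply the $\tfrac{1}{2d}$-box-intermixed hypothesis to $H$: the halfspace $H$ must intersect every multiset $S_j$. But $a_i = \max_j m_i^j$, so there exists some index $j^\ast$ with $m_i^{j^\ast} = a_i$. Every point $s \in S_{j^\ast}$ then has $s_i \ge m_i^{j^\ast} = a_i > t$, which means $H \cap S_{j^\ast} = \emptyset$. This contradicts the intermixedness, completing the proof.

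The argument for the case $|H_i^+ \cap S| \ge |S|/(2d)$ is symmetric, using $b_i = \min_j M_i^j$ and the index attaining the minimum. I do not anticipate a major obstacle here: the lemma essentially follows from the observation that the complement of $\bigcap_j \bbox(S_j)$ is a union of exactly $2d$ axis-aligned halfspaces (one per extremal coordinate direction), which matches the factor $1/(2d)$ in the hypothesis precisely. The only care needed is to convert the open halfspaces produced by pigeonhole into closed ones, which is immediate because $S$ is finite.
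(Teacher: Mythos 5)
Your proof is correct, but it takes a genuinely different route from the paper. The paper derives the lemma \emph{from} Halman's theorem (\cref{thm:halman}): it forms the auxiliary family $\{\bbox(X): X\subseteq S,\ |X|>\tfrac{2d-1}{2d}|S|\}$, checks that every $2d$ of these boxes share a point of $S$, extracts a common point $s\in S$ by Halman, and then uses a separating coordinate hyperplane plus the intermixing hypothesis to force $s\in\bbox(S_i)$ for every $i$. You instead argue directly: the complement of $\bigcap_j\bbox(S_j)=\prod_i[a_i,b_i]$ meets $S$ only inside the $2d$ coordinate halfspaces $\{x_i<a_i\}$, $\{x_i>b_i\}$, so by pigeonhole one of them is $\tfrac{1}{2d}$-heavy, and after closing it up it misses the multiset $S_{j^\ast}$ attaining the relevant extremum, contradicting intermixedness. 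This is shorter, entirely self-contained (no appeal to Halman's theorem), and makes the constant $\tfrac{1}{2d}$ transparent as ``one halfspace per facet direction''; the paper's version, by contrast, mirrors the B\'ar\'any--Matou\v{s}ek template where the analogous lattice lemma is deduced from the integer Helly theorem, which is why it is phrased as an application of \cref{thm:halman}. One small remark: your parenthetical about the degenerate case $a_i>b_i$ is unnecessary --- your covering-plus-pigeonhole step applies verbatim there, since the $2d$ halfspaces then cover all of $\rr^d$, so no separate ``two competing indices'' argument is needed.
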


\begin{proof}
    Let $S=S_1\cup\cdots\cup S_r$. Define the following set of boxes $\cal{B}=\{\bbox(X): X\subseteq S\text{ and } |X|>\frac{2d-1}{2d}|S|\}$. Note that for an arbitrary set $\bbox(X)$ in this family we have $|S\setminus\bbox(X)|<\frac{|S|}{2d}$. The complements of any $2d$ sets in $\cal{B}$ cannot contain all the points of $S$, so any $2d$ sets of $\cal{B}$ contain a point of $S$ in their intersection.
    
    As the sets of $\cal{B}$ are closed axis parallel boxes, by \cref{thm:halman} there is a point $s\in S$ such that $s\in\cap\cal{B}$. Suppose $s\notin \bbox(S_i)$ for some $i$. Then there exists a hyperplane $H$ orthogonal to one of the canonical basis vectors which contains $s$ and separates $S$ from $\bbox(S_i)$.  
    
    Let $A$ be the closed halfspace generated by this hyperplane which contains $s$ and is disjoint from $\bbox(S_i)$. Then $|S\cap A|\ge\frac{|S|}{2d}$, otherwise $\bbox(S\setminus A)$ would be a box in $\cal{B}$ that does not contain $s$. Therefore, $A$ is a half-space containing at least $\frac{1}{2d}|S|$ points of $S$ and no points of $S_i$. This contradicts $S_1,\cdots,S_r$ being $(1/2d)$-box-intermixed.
\end{proof}

The following lemma is key to go from intersection properties of families of boxes involving $S$ to intersection properties of families of boxes that do not involve $S$.

\begin{lemma}[The intermixing lemma]\label{lem: intermixing consequence}
    Let $S_1,\cdots,S_r$ be finite sets of points in $\rr^d$ of the same cardinality and $S$ be another set of points in $\rr^d$.  Let $I$ be a set of indices used to label each $S_j$, so that $S_j=\{s_j^i:i\in I\}$. Then one of the following holds:
    \begin{enumerate}
        \item $\displaystyle\bigcap_{j=1}^n\bbox(S_j)\cap S\neq\emptyset$.
        \item There exists $I'\subseteq I$ and indices $m,n\in[r]$ such that $|I'|\geq \frac{|I|}{2d}$ and $\bbox(S'_m)\cap\bbox(S'_n)=\emptyset$. Here $S'_j=\{s_j^i:i\in I'\}$.
    \end{enumerate}
\end{lemma}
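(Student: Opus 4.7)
The plan is to establish the contrapositive: assuming conclusion~(2) fails, I deduce conclusion~(1) by reducing to \cref{Lem: Box Intermix}. The key observation is that if no labeled restriction $I'\subseteq I$ of density at least $1/(2d)$ produces a pair of disjoint bounding boxes among the restricted families, then the tuple $(S_1,\dots,S_r)$, viewed as a disjoint multiset union $Z=S_1\sqcup\cdots\sqcup S_r$ of total cardinality $r|I|$, must already be $(1/2d)$-box-intermixed in the sense of the Intermixing Box Lemma. That lemma then yields a point of $\bigcup_j S_j$ inside $\bigcap_j \bbox(S_j)$, which under the intended reading (where each $s_j^i$ lies in $S$) delivers~(1).

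The heart of the argument is to extract the required $I'$ from a hypothetical witness to the failure of box-intermixedness. Suppose there is an axis-parallel closed halfspace $H$ with $|H\cap Z|\ge |Z|/(2d)$ but $H\cap S_{m_0}=\emptyset$ for some $m_0\in[r]$. A one-line averaging over $j\in[r]$ yields some $n_0\in[r]$ (necessarily $n_0\ne m_0$, since $|H\cap S_{m_0}|=0$) with $|H\cap S_{n_0}|\ge |I|/(2d)$. Setting
\[
I':=\{\,i\in I : s_{n_0}^i\in H\,\},
\]
one obtains $|I'|\ge |I|/(2d)$, $S'_{n_0}\subseteq H$, and $S'_{m_0}\subseteq S_{m_0}\subseteq H^c$. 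Because axis-parallel halfspaces are closed under taking bounding boxes of their subsets, it follows that $\bbox(S'_{n_0})\subseteq H$ and $\bbox(S'_{m_0})\subseteq H^c$, and hence $\bbox(S'_{m_0})\cap \bbox(S'_{n_0})=\emptyset$. This contradicts the assumed failure of~(2) with the triple $(I',m_0,n_0)$, so $(S_1,\dots,S_r)$ is $(1/2d)$-box-intermixed and \cref{Lem: Box Intermix} closes the argument.

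The main conceptual hurdle is the correct choice of $I'$: given a bad halfspace $H$, one restricts precisely to the labels at which the heavy class $S_{n_0}$ already sits inside $H$, which is exactly what forces $S'_{n_0}$ to land on one side of the bounding hyperplane of $H$ while $S'_{m_0}$ lands on the other. The remaining pieces -- a pigeonhole to select $n_0\neq m_0$, and the observation that axis-parallel halfspaces are closed under $\bbox$ -- are routine by comparison.
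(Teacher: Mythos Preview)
Your proof is correct and follows essentially the same route as the paper: show that failure of $(1/2d)$-box-intermixedness produces, via pigeonhole, a class $S_{n_0}$ with at least $|I|/(2d)$ points in the offending halfspace, then take $I'$ to be precisely those labels, yielding disjoint bounding boxes for $S'_{m_0}$ and $S'_{n_0}$. Your parenthetical about the ``intended reading'' of $S$ (namely that each $s_j^i\in S$) is exactly how the paper tacitly uses the lemma, and is needed for \cref{Lem: Box Intermix} to deliver conclusion~(1).
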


\begin{proof}
    If $S_1,\cdots,S_r$ are $\frac{1}{2d}$-box-intermixed, then $(1)$ follows immediately from Lemma \ref{Lem: Box Intermix}. If not, then there exists a hyperplane $H$ orthogonal to one of the canonical basis vectors such that one of the closed half-spaces $X$ formed by this hyperplane contains $\frac{r|I|}{2d}$ points from all the sets but no points from some $S_m$. Thus, there must be some index $n$ for which $X$ contains at least $\frac{r|I|}{(r-1)2d} \ge \frac{|I|}{2d}$ of the points of $S_n$. That is, $|X\cap S_n|\geq \frac{|I|}{2d}$. Let $I'=\{i\in I: s_n^i\in X\}$. As $H$ is orthogonal to one of the axes, we have $\bbox(S'_m)\cap\bbox(S'_n)=\emptyset$ as desired.
\end{proof}

Once \cref{lem: intermixing consequence} is established, the rest of the proof of \cref{thm:fractional-small} follows the structure of the B\'ar\'any--Matou\v{s}ek proof of their fractional Helly theorem for lattice convex sets.  The intermediary steps contain interesting results, which may also be interesting to the readers, so we include the details below.

To prove \cref{thm:fractional-small}, we need yet another colorful version of Halman's theorem.  In this one we will use only $d+1$ color classes.  Since the colorful Halman theorem is optimal (as Halman's theorem is optimal), the downside is that we won't be able to conclude that one color class has a point of $S$ in its intersection.  Instead, we will be able to guarantee that there are many sets in a single color class that have a point of $S$ in their intersection.

\begin{theorem}[Colorful Halman with few colors]\label{thm:colorful-halman-few-colors}
     Given integers $r,d$, there exists an integer $t$ such that the following holds. 
 Let $\cal{B}_1,\cal{B}_2,\cdots,\cal{B}_{d+1}$ be finite families of axis parallel boxes in $\R^d$, each with cardinality $t$. Let $S\subseteq\rr^d$ be a discrete set. Suppose that for every choice of  $B_i\in\cal{B}_i$ for each $i \in [d+1]$ we have $\bigcap_{i=1}^{d+1}B_i$ is not empty and has at least one point of $S$.
    Then there exists an index $l\in[d+1]$ such that there are $r$ boxes in $\cal{B}_l$ whose intersection contains at least one point of $S$.
\end{theorem}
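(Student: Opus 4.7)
The plan is to prove this Ramsey-type extension by contradiction, combining the intermixing machinery of \cref{Lem: Box Intermix} and \cref{lem: intermixing consequence} with a double-counting argument. Suppose the conclusion fails: for every color $l\in[d+1]$ and every point $s\in S$, strictly fewer than $r$ boxes of $\cal B_l$ contain $s$. Our goal is to bound $t$ in terms of $r$ and $d$, contradicting the choice of $t$ arbitrarily large.

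First, set up witnesses. Index $\cal B_l = \{B_l^1,\dots,B_l^t\}$. For each multi-index $(i_1,\dots,i_{d+1})\in[t]^{d+1}$, the hypothesis provides a witness $w(i_1,\dots,i_{d+1}) \in S\cap\bigcap_j B_j^{i_j}$. Under the contradiction assumption, each point of $S$ is the witness of at most $(r-1)^{d+1}$ tuples, so the image of $w$ contains at least $t^{d+1}/(r-1)^{d+1}$ distinct points of $S$, which grows without bound in $t$.

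The heart of the argument is to feed these witnesses into \cref{lem: intermixing consequence}, with $d+1$ multisets $S_1,\dots,S_{d+1}$ built so that $\bbox(S_l)$ is contained in a structured intersection of boxes from $\cal B_l$, while every witness indexed by the common index set $I$ simultaneously lies in all of the $S_l$. In case (1) of the dichotomy, we obtain a point of $S$ in $\bigcap_l \bbox(S_l)$, and the containment built into the $S_l$ forces this point to lie in at least $r$ boxes of some color class, contradicting our working assumption. In case (2), we obtain a sub-index $I'\subseteq I$ of size at least $|I|/(2d)$ and indices $m\neq n$ with $\bbox(S'_m)\cap\bbox(S'_n)=\emptyset$; but every witness indexed by $I'$ must lie in both hulls, which is immediately impossible.

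The main obstacle will be arranging the multisets $S_l$ so that case (1) really pins down $r$ boxes in a single color class rather than $r$ boxes distributed across several colors. This likely requires iterating the dichotomy: each step either halts in case (1) or invokes case (2) to shrink the index set by a factor of at most $2d$ while fixing an additional structural constraint (for instance, a forced coordinate projection). The number of such constraints that can be accumulated is bounded by a function of $r$ and $d$, so choosing $t$ larger than a suitable tower-type function of $r$ and $d$ ensures the iteration must terminate in case (1), yielding the desired contradiction and the claimed value of $t(r,d)$.
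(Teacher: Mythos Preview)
Your plan has a structural gap at the point where you invoke \cref{lem: intermixing consequence} with $d+1$ multisets $S_1,\dots,S_{d+1}$, one per color. You want both (a) $\bbox(S_l)$ to sit inside many boxes of $\cal{B}_l$, so that case~(1) yields $r$ boxes of one color containing a common point of $S$, and (b) ``every witness indexed by $I$ simultaneously lies in all of the $S_l$'', so that case~(2) is vacuous. But condition~(b) forces $s_l^i=s_{l'}^i$ for all $l,l'$, i.e.\ $S_1=\dots=S_{d+1}$ as indexed multisets; then case~(2) is indeed impossible, but case~(1) degenerates to the statement $\bbox(S_1)\cap S\neq\emptyset$, which is trivial (any witness is such a point) and says nothing about $r$ boxes of one color. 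Conversely, any construction in which the $S_l$ are genuinely different makes case~(2) a live possibility that you cannot dismiss. The iterative patch you sketch (``shrink $I$ while fixing a structural constraint'') never explains how accumulating disjointness constraints between hulls attached to \emph{different} colors could force $r$ boxes of the \emph{same} color to share an $S$-point, and the initial double-count of distinct witnesses is never used.

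The paper resolves this by applying \cref{lem: intermixing consequence} not across colors but \emph{within} a single color class. For a fixed $r$-tuple $R\subset\cal{B}_l$, the $r$ multisets are $S_v=\{s_e:v\in e\}$ for $v\in R$, each indexed by the $d$-tuples from the remaining classes. Now case~(1) gives a point of $S$ in $\bigcap_{v\in R}\bbox(S_v)\subset\bigcap_{v\in R}B_v$, which is exactly $r$ boxes of color $l$ with a common $S$-point. Case~(2) produces two vertices $u,v\in R$ with $\bbox(S'_u)\cap\bbox(S'_v)=\emptyset$ on a $(1/2d)$-fraction $I'$ of the index set; the Erd\H{o}s--Simonovits supersaturation theorem then locates a large complete $K^d(q_1)$ inside $I'$, restoring the hypergraph structure so that one can iterate over all $\binom{q}{r}$ $r$-tuples of class $l$, and then over all $d+1$ classes. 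If case~(1) never occurs, every class becomes $(r,2)$-disjoint, so one can select two vertices per class whose hulls $\bbox(S_v)$ are disjoint; this $2(d+1)$-box configuration contradicts the colorful Helly theorem for axis-parallel boxes, since every colorful transversal still contains the witness $s_e$ of the corresponding edge. The two ingredients your outline is missing are the within-color application of the intermixing lemma and the Erd\H{o}s--Simonovits step that keeps the iteration alive after each instance of case~(2).
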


We will only use \cref{thm:colorful-halman-few-colors} with $r=2d$, but we prove it for any $r$.  Since the colorful Halman theorem is optimal, we cannot expect $t=r$ if we use fewer than $2d$ colors.  We will use the following theorem by Erd\H{o}s and Simonovits \cite{Erdos1983}:

\begin{theorem}[Erd\H{o}s, Simonovits 1983]
    For every $\alpha >0$ and positive integers $d,t$, there exists a $\delta >0$ such that the following holds.  Let $H$ be a $(d+1)$-uniform hypergraph on $n$ vertices, with at least $\alpha \binom{n}{d+1}$ edges.  Then, $H$ contains at least $\delta n^{(d+1)t}$ copies of $K^{d+1}(t)$, the complete $(d+1)$-partite $(d+1)$-uniform hypergraph with each part having $t$ elements, as a subgraph.
\end{theorem}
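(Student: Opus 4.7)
The plan is to deduce the stated supersaturation bound from the corresponding Tur\'an-type extremal bound for $K^{d+1}(t)$ together with a random sampling and double-counting argument that is by now standard in extremal combinatorics. The two ingredients are (i) a K\H{o}v\'ari--S\'os--Tur\'an-type upper bound of the form $o(m^{d+1})$ on the number of edges in a $K^{d+1}(t)$-free $(d+1)$-uniform hypergraph on $m$ vertices, and (ii) a counting step that converts ``every sufficiently dense sub-hypergraph of bounded size contains at least one copy of $K^{d+1}(t)$'' into ``$H$ itself contains at least $\delta n^{(d+1)t}$ copies''.

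First I would establish the hypergraph K\H{o}v\'ari--S\'os--Tur\'an bound: there exists a constant $C = C_{d,t}$ so that every $K^{d+1}(t)$-free $(d+1)$-uniform hypergraph on $m$ vertices has at most $C\, m^{(d+1) - 1/t^d}$ edges. For $d = 1$ this is the classical bound for $K_{t,t}$. For $d \geq 2$ one proceeds inductively: for each $d$-tuple $T$ of vertices, let its link be the set of vertices $v$ with $T \cup \{v\}$ an edge; double count pairs $(T, L)$ where $L$ is a $t$-subset of the link of $T$; apply Jensen's inequality to the codegree distribution; and reduce the case where the bound is violated to an application of the inductive hypothesis in dimension $d-1$, which then forces a copy of $K^{d+1}(t)$ in $H$ by extending the witnessed structure by one further coordinate.

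Next, choose $m = m(\alpha, d, t)$ large enough that $C\, m^{(d+1) - 1/t^d} < \tfrac{\alpha}{2}\binom{m}{d+1}$; this is possible because the right-hand side grows like $m^{d+1}$. Sample a uniformly random $m$-subset $M \subseteq V(H)$. The expected number of edges of $H$ contained in $M$ is
\[
|E(H)| \cdot \frac{\binom{m}{d+1}}{\binom{n}{d+1}} \;\geq\; \alpha \binom{m}{d+1},
\]
so a Markov-type argument gives that at least an $(\alpha/2)$-fraction of the $m$-subsets satisfy $|E(H[M])| \geq \tfrac{\alpha}{2}\binom{m}{d+1}$, and each such $M$ must by the choice of $m$ span at least one copy of $K^{d+1}(t)$. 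Finally, every copy of $K^{d+1}(t)$ in $H$ uses exactly $(d+1)t$ vertices and is therefore contained in exactly $\binom{n - (d+1)t}{m - (d+1)t}$ of the $m$-subsets; dividing the count of ``dense'' $m$-subsets by this multiplicity yields at least
\[
\frac{\alpha}{2} \cdot \frac{\binom{n}{m}}{\binom{n-(d+1)t}{m-(d+1)t}} \;=\; \Theta\!\left(n^{(d+1)t}\right)
\]
copies of $K^{d+1}(t)$ in $H$, which is the desired bound for a suitable $\delta = \delta(\alpha, d, t) > 0$.

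The main obstacle is the hypergraph K\H{o}v\'ari--S\'os--Tur\'an bound of the first step: carrying out the induction on $d$ so that exactly the exponent $1/t^d$ is preserved requires some care with the Jensen estimate applied to the codegree distribution and with the correct formulation of the inductive hypothesis at each level. Once that extremal bound is in hand, the sampling plus double-counting deduction is essentially bookkeeping, and the dependence of $\delta$ on $\alpha$, $d$, and $t$ can be traced through explicitly.
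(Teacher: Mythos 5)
The paper does not prove this statement; it is quoted verbatim from Erd\H{o}s--Simonovits \cite{Erdos1983} and used as a black box in the proof of \cref{thm:colorful-halman-few-colors}, so there is no internal proof to compare against. That said, your proposal is correct and is essentially the original argument: supersaturation for a $(d+1)$-partite target follows from (i) Erd\H{o}s's 1964 extremal bound $\mathrm{ex}\bigl(m, K^{d+1}(t)\bigr) = O\bigl(m^{(d+1)-1/t^{d}}\bigr)$, proved by the link/Jensen induction you sketch, and (ii) averaging over $m$-element vertex subsets. The bookkeeping checks out: the reverse Markov step gives that at least an $\alpha/2$ fraction of the $m$-subsets induce at least $\tfrac{\alpha}{2}\binom{m}{d+1}$ edges and hence a copy of $K^{d+1}(t)$, and the multiplicity count simplifies via $\binom{n}{m}\big/\binom{n-(d+1)t}{m-(d+1)t} = \binom{n}{(d+1)t}\big/\binom{m}{(d+1)t} = \Theta\bigl(n^{(d+1)t}\bigr)$ for fixed $m$. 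The only part requiring real work is the K\H{o}v\'ari--S\'os--Tur\'an-type bound in step (i), which you correctly flag; in a writeup you could simply cite Erd\H{o}s's extremal result rather than reprove it, since the exponent $1/t^{d}$ is irrelevant here --- all that matters is that the extremal count is $o(m^{d+1})$, i.e., that $K^{d+1}(t)$ has Tur\'an density zero.
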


\begin{proof}[Proof of \cref{thm:colorful-halman-few-colors}]
    Given a finite family of axis-parallel boxes $\cal B$ in $\rr^d$ and a finite set $S \subset \rr^d$, recall the definition of the simplicial complex $K(\cal B, S)$.  We consider the hypergraph $H_S(\cal{B},E)$ made of the faces of $K(\cal B, S)$ with $d+1$ vertices, and let $E$ be the set of hyperedges of this hypergraph.

    For $\cal{B} = \bigcup_{i=1}^{d+1}\cal{B}_i$, we have that $H_S(\cal{B},E)$ contains the complete $(d+1)$-uniform $(d+1)$-partite hypergraph $K^{d+1}(t)$, where each of the components has $t$ vertices (corresponding to some $\cal{B}_i$).  We say that the $i$-th class of $H_S(\cal{B},E)$ is the set of vertices corresponding to $\cal{B}_i$.

    For each edge $e \in E$, let $s_e$ be a point in $S$ in the intersections of the boxes corresponding to $e$.  For a vertex $v$, let $S_v =\{s_e : v \in e \in E\}$ and $G_v = \bbox (S_v) \cap S$.  Note that the box corresponding to $v$ contains all the points of $G_v$.  We want to prove that there is a collection $R$ of $r$ points of some class of $H_S(\cal{B},E)$ such that $\bigcap_{v\in R} G_v \neq \emptyset$.  If we don't find such a set $R$ of vertices, we are going to construct a sequence of sub-hypergraphs $H_S(\cal{B},E)=H_0 \supset H_1 \supset \dots \supset H_k \supset \dots$.  As we trim vertices and edges from our graphs, the sets $S_v, G_v$ get updated.  If at no point we find the collection $R$ of $r$ vertices, we aim to reach a set of axis-parallel boxes of the form $\bbox(S_v)$ that contradict the colorful Helly theorem.

    For any sub-hypergraph $H$ of $H_S(\cal{B},E)$, we can construct the sets $S_v, G_v$ as above, restricting to the edges of $H$.  We say that the $i$-th class of $H$ is $(r,2)$-disjoint if among any $r$ sets of the form $\bbox (S_v)$ there are two that are disjoint.

    \begin{lemma}\label{lem:trimming}
        Let $d,r, q$ be positive integers.  There exists an integer $Q = Q(d,r,q)$ such that the following holds.  Let $i \in [d+1]$ and $H = K^{d+1}(Q)$.  Assume that for each edge $e$ of $H$, a point $s_e \in S$ is given, and we construct the sets $S_v, G_v$ as above.  Then, at least one of the following two statements holds:
        \begin{itemize}
            \item there is a set $R$ of $r$ vertices of the $i$-th class of $H$ such that $\bigcap_{v\in R} G_v \neq \emptyset$;
            \item there is a subhypergraph of $H'$ of $H$ whose $i$-th class is $(r,2)$-disjoint.
        \end{itemize}
    \end{lemma}

    Establishing \cref{lem:trimming} finishes the proof of \cref{thm:colorful-halman-few-colors}.  Provided $t$ is large enough, as long as we don't find a class of vertices $R$ as in the first statement (in which case we would be done), we can start using \cref{lem:trimming} iteratively to try to make each class $(r,2)$-disjoint.  If every class is $(r,2)$-disjoint, we choose two vertices in each class corresponding to disjoint boxes of the form $\bbox(S_v)$ in $\rr^d$.  By construction, if we pick one box from each pair, they correspond to an edge $e$ of our hypergraph, so $s_e$ is contained in each of the corresponding boxes.  This construction contradicts Helly's colorful theorem.

    Now let's prove \cref{lem:trimming}.  Denote by $K^{d+1}(x_1, \dots, x_{d+1})$ the complete $(d+1)$-partite $(d+1)$-uniform hypergraph in which the $j$-th class has $x_j$ vertices.  We assume without loss of generality that $i= 1$.  Our starting hypergraph is $K^{d+1}(Q,Q,\dots, Q)$.  We pick any $q$ vertices of the first class and restrict ourselves to them, so we have a new hypergraph $H_0 = K^{d+1}(q,Q,Q,\dots, Q)$.  Denote by $V_1$ the $q$ vertices of the first class of $H_0$.  Let $R_1$ be an $r$-tuple of vertices of $V_1$.  
    
    Consider the $r$ (multi)sets $S_v$ for $v \in R_1$.  Note that the cardinality of each $S_v$ does not depend on $v$, as we get one point for each edge containing $v$.  In other words, the set $S_v$ is indexed by $K^d(Q)$ (call this set $I$).  We can apply \cref{lem: intermixing consequence} to them.  If item (1) holds, we have found our $r$ sets $G_v$ that intersect.  If the second happens, we can keep a subset $I' \subset I$ with at least $(1/2d)$ such that two of the corresponding boxes $\bbox(S'_v)$ are disjoint.  Now look at the subgraph of $K^d(Q)$ induced by this set $I'$ of edges.  Since it contains a $(1/2d)$-fraction of the edges of $K^d(Q)$, by the Erd\H{o}s-Simonovitz theorem, there is a number $q_1 = q_1(Q)$ such that there is a $K^d(q_1)$ induced in this subgraph.  Now, restrict $H_0 = K(q,Q, \dots, Q)$ to $H_1 = K(q,q_1,\dots, q_1)$, by restricting the $j$-th class of $H_0$ to the $(j+1)$-th class of this $K^d(q_1)$ for all $j \in [d]$.

    What we have achieved is that now, the $r$-tuple $R_1$ induces a pair of disjoint boxes.  We repeat this process $\binom{q}{r}$ times, once for each $r$-tuple of $V_1$.  If we never found an $r$-tuple of vertices of $V_1$ that induce an intersecting $r$ tuple of sets of the form $G_v$ for $v$, we get a sequence of hypegraphs $H_k = K^{d+1}(q,q_k, \dots, q_k)$ for a decreasing sequence $Q>q_1 > \dots > q_k > \dots$.  Provided that $Q$ is large enough, we will have $q_k \ge q$ for $k = \binom{q}{r}$.  At this point, we can assume without loss of generality that $q_k = q$ for $k = \binom{q}{r}$, and we have proven the lemma.
\end{proof}

%Given a finite family of axis-parallel boxes $\cal B$ in $\rr^d$ and a finite set $S \subset \rr^d$, recall the definition of the simplicial complex $K(\cal B, S)$.  We consider the hypergraph $H_S(\cal{B},E)$ made of the faces of $K(\cal B, S)$ with $d+1$ vertices.

%We will use the following theorem by Erd\H{o}s and Simonovits:

%\begin{theorem}
 %   For every $\alpha >0$ and positive integers $d,t$, there exists a $\delta >0$ such that the following holds.  Let $H$ be a $(d+1)$-uniform hypergraph on $n$ vertices, with at least $\alpha \binom{n}{d+1}$ edges.  Then, $H$ contains at least $\delta n^{(d+1)t}$ copies of $K^{d+1}(t)$ as a subgraph.
%\end{theorem}

%Now we introduce some additional notation and technical tools. Let $S$ be a discrete set in $\rr^d$. For a family of axis-parallel boxes $\cal B$ in $\rr^d$, we construct a hypergraph $H_S(\cal{B},E)$.  Each vertex in $H_S(\cal{B},E)$ corresponds to a box in $\cal B$ that contains at least one point of $S$.

%where the vertex set is associated with a family of axis parallel boxes in $\R^d$. Let $\{F_v: v\in V\}$ be a family of axis parallel boxes such that $\displaystyle\bigcap_{v\in e}F_v\cap S\neq\emptyset$ for all $e\in E$. Now for each $e\in E$ fix a point $s_e\in S$ such that $s_e\in\cap_{v\in e}F_v\cap S$. Let $S_v(H)=\{s_e: v\in e: e\in E\}$. Finally consider $B_v(H_S)=\bbox(S_v(H))\cap S$. We say the $i$-th class of $H_S$ is $(r,2)$-box-disjoint if for any set of $r$ vertices from the $i$-th class of $H_S$ we have there exist two vertices, $v_1,v_2$ such that $\bbox(S_{v_1}(H))\cap\bbox(S_{v_2}(H))=\emptyset$.

\section{$H$-convex sets}\label{sec:hconvex}

If we want to generalize the discrete Helly-type theorems of this manuscript to a wider family of sets that axis-parallel boxes, we can use $H$-convex sets.  These sets have been studied before for their Helly-type properties \cites{Boltyanski:2003ir, Sarkar2021}.

\begin{definition}[$H$-convex set]
    Let $H\subseteq S^{d-1}$ be a set of vectors in the unit ball which are not contained in any closed half sphere. A set is $H$-convex if it can be written as the intersection of halfspaces of the form $H(h,c)=\{x\in\R^d:\langle x,h\rangle\leq c\}$ where $h\in H$ and $c\in\R$.
\end{definition}

\begin{figure}
    \centering
    \includegraphics[width=\textwidth]{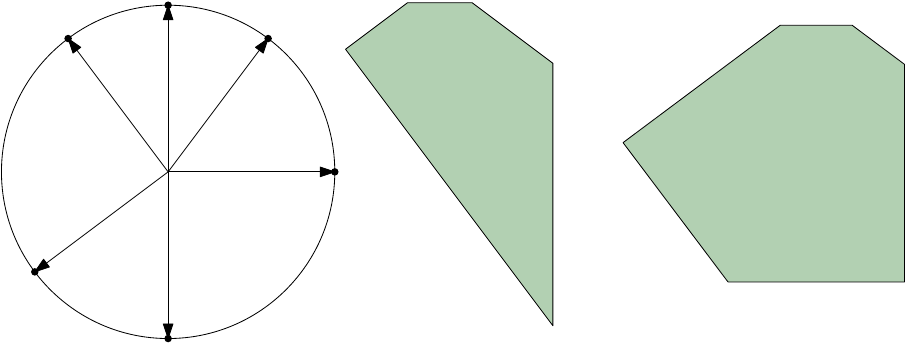}
    \caption{An example of a set $H$ and two $H$-convex sets.  Note that an $H$ convex set does not need to have a facet for each element of $H$.}
    \label{fig:hconvex}
\end{figure}

We present an illustration of $H$-convex sets in \cref{fig:hconvex}.  For any fixed $H$, the intersection of finite number of $H$-convex sets is also a $H$-convex set. This key observation allows us to generalize the proofs of the previous sections to this class of polytopes. The case of axis parallel boxes can be recovered by letting $H=\{\pm e_1,\cdots,\pm e_d\}$ where $e_1,\cdots,e_d$ are the elements of the canonical basis of $\rr^d$.

Let us state a Halman-type theorem for $H$-convex sets.

\begin{theorem}\label{thm:discrete-helly-hconvex}
    Let $H$ be a finite set of unit direction vectors in $\rr^d$ which are not contained in any closed half unit sphere. Let $k=|H|$ and $S\subset \rr^d$ be a discrete point set. Let $\cal{F}$ be a finite family of $H$-convex sets such that $|\cal{F}|\ge k$. If the intersection of any $k$ sets of $\cal{F}$ contains a point of $S$ then $\cap\cal{F}$ contains a point of $S$.
\end{theorem}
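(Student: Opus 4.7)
The plan is to mimic the Radon-style proof of Halman's theorem from \cref{sec:new-proofs}, replacing the box hull by its $H$-convex analogue. Define $\mathrm{hconv}(X)$ to be the intersection of all $H$-convex sets containing $X$; equivalently, it is the intersection of all closed halfspaces of the form $\{y\in\rr^d : \langle y,h\rangle \le c\}$ with $h\in H$ that contain $X$. Since finite intersections of $H$-convex sets are again $H$-convex, this is itself an $H$-convex set containing $X$.

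The first step is to prove the following $H$-convex analogue of \cref{lemma:Radon-box}: if $X\subset\rr^d$ is a finite set with $|X|\ge k+1$, then there is some $x\in X$ with $x\in \mathrm{hconv}(X\setminus\{x\})$. For each $h\in H$ select $a_h\in X$ maximizing $\langle \cdot,h\rangle$ on $X$. The key geometric observation is that
\[
\mathrm{hconv}(X) = \mathrm{hconv}(\{a_h : h\in H\}),
\]
because every halfspace $\{y : \langle y,h\rangle \le c\}$ that contains $X$ satisfies $c\ge \langle a_h,h\rangle$, so it also contains $a_h$. As $|X|\ge k+1 = |H|+1$, there must exist some $x\in X$ that is not among the $a_h$, and then $x\in\mathrm{hconv}(X)\subseteq \mathrm{hconv}(X\setminus\{x\})$.

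The second step is an induction on $|\mathcal{F}|$ exactly parallel to the proof that follows \cref{lemma:Radon-box}. The base case $|\mathcal{F}|=k$ is the hypothesis. For the inductive step, suppose the statement holds for families of size $n\ge k$ and consider $\mathcal{F}$ with $|\mathcal{F}|=n+1$. For each $F_i\in\mathcal{F}$, apply the inductive hypothesis to $\mathcal{F}\setminus\{F_i\}$ to produce $s_i\in\bigl(\bigcap(\mathcal{F}\setminus\{F_i\})\bigr)\cap S$. If any two of the $s_i$ coincide, that common point already lies in $\bigcap\mathcal{F}\cap S$ and we are done. Otherwise the $n+1\ge k+1$ points $s_1,\dots,s_{n+1}$ are all distinct, so the Radon-style lemma gives an index $j$ with $s_j\in\mathrm{hconv}(\{s_i : i\ne j\})$. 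Each $s_i$ with $i\ne j$ lies in $F_j$, and since $F_j$ is $H$-convex we obtain $\mathrm{hconv}(\{s_i:i\ne j\})\subseteq F_j$. Hence $s_j\in F_j$, which together with $s_j\in\bigcap(\mathcal{F}\setminus\{F_j\})$ places $s_j$ in $\bigcap\mathcal{F}\cap S$.

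The main obstacle is the identity $\mathrm{hconv}(X) = \mathrm{hconv}(\{a_h:h\in H\})$: this is precisely what forces the Helly number to match $|H|$, and it uses in an essential way the hypothesis that $H$ is not contained in a closed half-sphere (so that $H$-convex hulls of finite sets are bounded and the maxima $\langle a_h,h\rangle$ are finite). Once this reduction to at most $k$ ``extremal'' points is in place, the remainder is a direct translation of the box argument, using only that finite intersections of $H$-convex sets are $H$-convex. The sharpness of the Helly number $k$ follows from the same kind of example as the remark after \cref{lemma:Radon-box}: one chooses a set of $k$ points, one per direction in $H$, in sufficiently general position that no one of them lies in the $H$-convex hull of the others, and takes $\mathcal{F}$ to be the family of their hulls of $(k-1)$-subsets.
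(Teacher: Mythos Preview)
Your argument is correct, and it follows a genuinely different route from the paper's own proof. The paper proves \cref{thm:discrete-helly-hconvex} by the lexicographic method of \cref{sec:new-proofs}: it assumes $-e_1\in H$, orders $S$ lexicographically, picks a $(k-1)$-tuple $\cal{A}_0$ whose lexicographic extremum $s_0$ is optimal, and then shows that any further set $B\in\cal{F}$ must contain $s_0$ by rebuilding a $(k-1)$-tuple from the facets of $B\cap(\bigcap\cal{A}_0)$, omitting the facet in direction $-e_1$. You instead port the Radon-style proof of \cref{sec:new-proofs}: you establish the $H$-convex analogue of \cref{lemma:Radon-box} via the extremal points $a_h$ and then induct on $|\cal{F}|$. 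Both are valid.

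What each approach buys: the paper's lexicographic argument immediately upgrades to the colorful and quantitative statement (this is exactly the remark following the paper's proof), whereas your induction on a single family does not directly yield the colorful version. On the other hand, your proof, like the box version in \cref{sec:new-proofs}, does not use discreteness of $S$ at all---only the finiteness of $\cal{F}$---so it gives the theorem for arbitrary $S\subset\rr^d$. One small comment: your parenthetical that the half-sphere hypothesis on $H$ is what makes the maxima $\langle a_h,h\rangle$ finite is off; finiteness of the maxima comes from $|X|<\infty$, and your identity $\mathrm{hconv}(X)=\bigcap_{h\in H}\{y:\langle y,h\rangle\le\langle a_h,h\rangle\}$ holds regardless. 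The half-sphere condition is only part of the standing definition of $H$-convexity in the paper and is not actually used in your argument.
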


\begin{proof}
    Assume without loss of generality that $-e_1 \in H$, and order the points of $S$ lexicographically.  For each $(k-1)$-tuple $\cal{A}$ of sets in $\cal{F}$, consider their lexicographic maximum $s \in S$ in $\bigcap \cal{A}$.  Let $\cal{A}_0$ be the $(k-1)$-tuple whose lexicographic maximum $s_0$ is minimal.
    
    Let $B$ be any other set in $\cal{F}$.  We will show that $s_0 \in B$.  Assume for the sake a contradiction that $B$ does not contain $s_0$.  Then, notice that $R=B \cap \left( \bigcap \cal{A}_0\right)$ is an $H$-convex set.  Each facet of $R$ is determined by a hyperplane.  For each such hyperplane $\pi$, there must be a set in $\{B\} \cup \cal{A}_0$ that has $\pi$ as a support hyperplane.  Let $\cal{B} \subset \{B\} \cup \cal{A}_0$ be the family of sets constructed this way for each facet of $R$, except the facet in direction $-e_1$ (if there is one).  Therefore, $\cal{B}$ has at most $k-1$ sets.  If $\cal{B}$ has fewer than $k-1$, we can complete it to a $(k-1)$-tuple using sets of $\{B\} \cup \cal{A}_0$.  Notice that all points in $\bigcap \cal{B}$ are in $R \subset \bigcap \cal{A}_0$ or have smaller first coordinate.  Since $R$ must have points of $S$, it means that the lexicographic maximum of $\bigcap B$ is smaller than $s_0$, giving us the contradiction wanted.

    As every other set $\cal B$ must contain $s_0$, we have the desired result.
\end{proof}

Another interesting example of $H$-convex sets is when $H = d+1$.  In this case, all $H$-convex sets are homothetic simplices.

Note that the argument of the proof above is identical to the one in the proof of \cref{thm: colorful quantitative halman}, so a colorful quantitative version of \cref{thm:discrete-helly-hconvex} follows, which we include below.

\begin{theorem}
     Let $H$ be a finite set of unit direction vectors in $\rr^d$ which are not contained in any closed half unit sphere. Let $k=|H|$, $n$ be a positive integer, and $S\subset \rr^d$ be a discrete point set.  Let $\cal{F}_1, \dots, \cal{F}_k$ be finite families of $H$ convex sets such that the intersection of every $k$-tuple $F_1 \in \cal{F}_1, \dots, F_k \in \cal{F}_k$ contains at least $n$ points of $S$.  Then, there exists $i \in [n]$ such that $\bigcap \cal{F}_i$ contains at least $n$ points of $S$.
\end{theorem}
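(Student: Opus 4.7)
The plan is to combine the maximality-of-lexicographic-extreme argument from the proof of \cref{thm: colorful quantitative halman} with the facet-support construction from the proof of \cref{thm:discrete-helly-hconvex}. After a rotation of coordinates we may assume $-e_1 \in H$, and we order $S$ lexicographically. Call a subfamily $\cal{A} \subseteq \bigcup_{i=1}^k \cal{F}_i$ a \emph{colorful $(k-1)$-tuple} if it contains exactly one set from each of $k-1$ distinct families $\cal{F}_i$. For each $j \in [n]$ and each colorful $(k-1)$-tuple $\cal{A}$, let $f_j(\cal{A})$ be the $j$-th lexicographically largest point of $S \cap \bigcap \cal{A}$; this is well defined because completing $\cal{A}$ by any set from the missing color class yields a colorful $k$-tuple whose intersection already contains at least $n$ points of $S$. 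Choose a colorful $(k-1)$-tuple $\cal{A}_0$ that minimizes $f_n$ in the lexicographic order, and, after relabelling, assume $\cal{A}_0$ avoids $\cal{F}_k$.

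Fix an arbitrary $B \in \cal{F}_k$ and let $R = B \cap \bigcap \cal{A}_0$, which is again $H$-convex. For each direction $h \in H \setminus \{-e_1\}$ in which $R$ has a facet, pick a set from $\cal{A}_0 \cup \{B\}$ whose halfspace in direction $h$ is binding on $R$; such a set exists because $R$ is the intersection of the halfspace constraints of the members of $\cal{A}_0 \cup \{B\}$. Collect these into a family $\cal{B}$ of size at most $k-1$ whose members lie in pairwise distinct color classes (since $\cal{A}_0 \cup \{B\}$ contains exactly one set per color), and extend $\cal{B}$ to a colorful $(k-1)$-tuple $\cal{A}'$ by appending, if necessary, the unique member of $\cal{A}_0 \cup \{B\}$ in each still-unused color. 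The key geometric observation is that $\bigcap \cal{A}' \supseteq R$ and that every point of $\bigcap \cal{A}' \setminus R$ violates the $-e_1$ halfspace of $R$, hence has strictly smaller first coordinate than every point of $R$ and is therefore lexicographically below all of $R$.

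Since $|R \cap S| \ge n$ by hypothesis, the $n$ lexicographically largest points of $\bigcap \cal{A}' \cap S$ all lie in $R \cap S$; equivalently $f_j(\cal{A}')$ is the $j$-th lex-largest point of $R \cap S$ for each $j \in [n]$. Because $R \subseteq \bigcap \cal{A}_0$, we have $f_j(\cal{A}') \preceq f_j(\cal{A}_0)$ for all $j \in [n]$, and minimality of $f_n(\cal{A}_0)$ forces $f_n(\cal{A}') = f_n(\cal{A}_0)$. A short pigeonhole step (if the $n$-th lex-largest points of a finite set $T$ and of a subset $T' \subseteq T$ coincide, then the whole top-$n$ lists coincide) then upgrades this to $f_j(\cal{A}') = f_j(\cal{A}_0)$ for every $j \in [n]$. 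Hence the $n$ lex-largest points of $\bigcap \cal{A}_0 \cap S$ lie in $R \subseteq B$, and as $B \in \cal{F}_k$ was arbitrary they lie in $\bigcap \cal{F}_k \cap S$, proving the theorem with $l = k$. The only subtle point, and the main obstacle, is arranging that $\bigcap \cal{A}'$ extends beyond $R$ exclusively in the $-e_1$ direction; this dictates the choice to exclude $-e_1$ when selecting facet-supporting sets, and once it is secured the lexicographic bookkeeping reduces the quantitative statement to the $n$-th lex-extremum in exactly the manner of \cref{thm: colorful quantitative halman}.
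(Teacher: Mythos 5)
Your proof is correct and follows exactly the route the paper intends: it merges the facet-support argument of \cref{thm:discrete-helly-hconvex} (excluding the $-e_1$ direction so that the modified colorful $(k-1)$-tuple only gains lexicographically smaller points) with the quantitative top-$n$ lexicographic bookkeeping of \cref{thm: colorful quantitative halman}, which is precisely the combination the paper invokes when it states that the argument is identical. The extremality step, the top-$n$ coincidence argument, and the conclusion for the missing color class all match the paper's reasoning, so no further changes are needed.
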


Just as the quantitative Halman theorem implied a version for measure, the case $\cal{F}_1 = \dots = \cal{F}_d$ implies a quantitative version of \cref{thm:discrete-helly-hconvex} for any measure.  This improves earlier quantitative results for $H$-convex sets, as the methods only worked for log-concave measures \cite{Sarkar2021}*{Theorem 2.3.2}.

The same construction of discrete nerve complexes as in \cref{sec:topological} shows that the corresponding discrete nerve complexes for $H$-convex sets are $(|H|-1)$-collapsible.  Therefore, we obtain for free colorful matroid versions of \cref{thm:discrete-helly-hconvex} and fractional versions of \cref{thm:discrete-helly-hconvex}.

\begin{theorem}
     Let $H$ be a finite set of unit direction vectors in $\rr^d$ which are not contained in any closed half unit sphere and $k = |H|$.  Let $S$ be a finite set in $\rr^d$ and $M$ be a matroid complex with rank function $\rho$ and vertex set $V$.  For each $v$ in $M$, we are given an $H$-convex set $K_v$ in $\rr^d$.  For each independent set $U \subset V$, there is a point of $S$ in $\bigcap_{v \in U} K_v \subset \rr^d$.  Then, there exists a set $T \subset V$ such that $\rho (V \setminus T) \le k-1$ and such that there is a point of $S$ in $\bigcap_{v \in T} K_v$.
\end{theorem}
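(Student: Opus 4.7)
The plan is to mimic \cref{sec:topological} verbatim, replacing axis-parallel boxes by $H$-convex sets and $2d$ by $k=|H|$ throughout. Given a finite family $\mathcal{F}$ of $H$-convex sets in $\rr^d$ and a finite set $S\subset\rr^d$, I would define the simplicial complex $K(\mathcal{F},S)$ whose vertices are the sets $F\in\mathcal{F}$ with $F\cap S\neq\emptyset$ and whose faces are the subsets of vertices whose corresponding $H$-convex sets have an intersection meeting $S$. The goal will be to show that $K(\mathcal{F},S)$ is $(k-1)$-collapsible; once this is established, $K(\mathcal{F},S)$ is automatically $(k-1)$-Leray, and the matroid colorful Helly theorem of Kalai and Meshulam for $(k-1)$-Leray complexes yields the desired conclusion exactly as in the derivation of \cref{thm:matroid-halman} from \cref{thm:collapse-halman}.

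The heart of the argument is the collapsibility statement, which I would prove by induction on $|S|$ following the proof of \cref{thm:collapse-halman}. After choosing a direction $h\in H$ in whose opposite direction we orient the lexicographic order on $\rr^d$ (as in the proof of \cref{thm:discrete-helly-hconvex}), let $s_0\in S$ be the lexicographic maximum, and let $\mathcal{F}_0\subseteq\mathcal{F}$ be the collection of sets containing $s_0$. Arguing as in the box case, we may assume that for every $s\in S$ there is a subfamily of $\mathcal{F}$ whose intersection with $S$ is exactly $\{s\}$; otherwise a point can be discarded with no change to the complex, and the induction step is trivial. It then suffices to exhibit a subfamily $\mathcal{F}'\subseteq\mathcal{F}_0$ of size at most $k-1$ with $\bigcap\mathcal{F}'\cap S=\{s_0\}$, since the corresponding face $\sigma$ of dimension at most $k-2$ is contained in the unique inclusion-maximal face determined by $\mathcal{F}_0$, and the $(k-1)$-collapse of $\sigma$ produces the complex $K(\mathcal{F},S\setminus\{s_0\})$ to which the inductive hypothesis applies.

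The main obstacle, and the only place where the geometry of $H$-convex sets enters, is producing this subfamily $\mathcal{F}'$. Here I would argue exactly as in the proof of \cref{thm:discrete-helly-hconvex}: the intersection $R=\bigcap\mathcal{F}_0$ is itself $H$-convex, and each of its facets is supported by a hyperplane with outer normal in $H$ coming from some member of $\mathcal{F}_0$. There are at most $k$ such facets, so one supporting set per facet gives a subfamily of at most $k$ sets whose intersection is already $R$. Discarding the set responsible for the facet with outer normal $h$ (the direction aligned with the lex order), and padding with arbitrary sets of $\mathcal{F}_0$ if fewer than $k-1$ sets remain, leaves a family $\mathcal{F}'\subseteq\mathcal{F}_0$ of size at most $k-1$ whose intersection contains $R$ and no additional points of $S$: any extra point would lexicographically exceed $s_0$, contradicting the maximality of $s_0$ in $S$. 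Hence $\bigcap\mathcal{F}'\cap S=\{s_0\}$, as required.

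With $(k-1)$-collapsibility in hand, we invoke that every $m$-collapsible complex is $m$-Leray, and then apply the Kalai--Meshulam colorful matroid Helly theorem: the hypothesis that every independent set $U\subset V$ of $M$ corresponds to a face of $K(\mathcal{F},S)$ translates, via $(k-1)$-Lerayness, into the existence of a set $T\subset V$ with $\rho(V\setminus T)\leq k-1$ such that the boxes indexed by $T$ meet $S$ in a common point. This is precisely the conclusion of the theorem.
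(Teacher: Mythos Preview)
Your proposal is correct and follows precisely the route the paper takes: the paper states that ``the same construction of discrete nerve complexes as in \cref{sec:topological} shows that the corresponding discrete nerve complexes for $H$-convex sets are $(|H|-1)$-collapsible,'' and then invokes Kalai--Meshulam, which is exactly what you have spelled out in detail. The only portion requiring care is producing the $(k-1)$-subfamily $\mathcal{F}'$ with $\bigcap\mathcal{F}'\cap S=\{s_0\}$, and your facet-selection argument (choose one set per outer normal in $H$ realizing the tight constraint, drop the one in the direction that increases the lexicographic order) is the correct $H$-convex analogue of the box argument used in the proofs of \cref{thm: colorful quantitative halman} and \cref{thm:discrete-helly-hconvex}.
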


Just as the methods of B\'ar\'any and Matou\v{s}ek carry over to axis-parallel boxes, if we apply the arguments of \cref{sec:fractional-and-beyond} to $H$-convex sets we prove the following result.

\begin{theorem}\label{thm:fractional-small-hconvex}
    Let $H$ be a finite set of unit direction vectors in $\rr^d$ which are not contained in any closed half unit sphere and $k = |H|$.  Let $d$ be a positive integer and $\alpha \in (0,1)$.  There exists $\beta = \beta(\alpha, d, k) > 0$ such that the following holds.  If $S$ is a finite set in $\rr^d$ and $\cal B$ is a finite family of $H$-convex sets such that $\alpha \binom{|\cal B|}{d+1}$ of the $(d+1)$-tuples of $\cal{B}$ satisfy that their intersection contains points of $S$, then there is a subfamily of at least $\beta |\cal B|$ sets of $\cal B$ whose intersection contains a point of $S$.
\end{theorem}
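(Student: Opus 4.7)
The plan is to transport the Bárány--Matoušek argument of \cref{sec:fractional-and-beyond} from axis-parallel boxes to $H$-convex sets, keeping track of where the constants $2d$ and $d+1$ came from. For any finite set $X \subset \rr^d$ let $\conv_H(X)$ denote the smallest $H$-convex set containing $X$ (the intersection of all $H$-convex sets containing $X$); this plays the role of $\bbox$. Since the intersection of finitely many $H$-convex sets is $H$-convex, and by \cref{thm:discrete-helly-hconvex} any family of $H$-convex sets whose $k$-wise intersections with a discrete set $S$ are nonempty has nonempty common intersection with $S$, the entire toolkit is available.

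First I would establish an \emph{$H$-intermixing lemma}: calling multisets $Z_1, \dots, Z_r$ with union $Z$ \emph{$\varepsilon$-$H$-intermixed} if every halfspace $H(h,c)$ (with $h\in H$) containing at least $\varepsilon|Z|$ points of $Z$ meets every $Z_j$, one shows as in \cref{Lem: Box Intermix} that $\frac{1}{k}$-$H$-intermixed multisets $S_1,\dots,S_r$ satisfy $\bigcap_{j=1}^r \conv_H(S_j) \cap S \ne \emptyset$. The proof is identical: consider the family $\cal F = \{\conv_H(X) : X \subseteq S,\ |X| > \frac{k-1}{k}|S|\}$, observe that any $k$ members have a common point of $S$ (as their complements cover less than $|S|$), apply \cref{thm:discrete-helly-hconvex}, and if a common point $s$ fails to lie in some $\conv_H(S_i)$ find a halfspace $H(h,c)$ with $h\in H$ separating $s$ from $\conv_H(S_i)$, contradicting intermixing. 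This immediately yields the $H$-analog of the intermixing consequence \cref{lem: intermixing consequence}: for indexed sets $S_j = \{s_j^i : i \in I\}$, either $\bigcap_j \conv_H(S_j) \cap S \ne \emptyset$, or some subset $I' \subseteq I$ with $|I'| \ge |I|/k$ and indices $m,n$ satisfy $\conv_H(S'_m) \cap \conv_H(S'_n) = \emptyset$.

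Next I would prove an $H$-convex version of \cref{thm:colorful-halman-few-colors}: given integers $r, d, k$, there exists $t = t(r,d,k)$ such that if $\cal{F}_1,\dots,\cal{F}_{d+1}$ are families of $H$-convex sets in $\rr^d$ with $|\cal{F}_i|=t$ and every colorful $(d+1)$-tuple has a point of $S$ in its intersection, then some class $\cal{F}_l$ contains $r$ sets whose intersection hits $S$. The proof is the same trimming argument, using the $H$-intermixing consequence in place of the box version; the number of colors remains $d+1$ because the final contradiction invokes the ordinary colorful Helly theorem in $\rr^d$, and Erdős--Simonovits supplies the saturation of $K^{d+1}(q_k)$-subgraphs after each trimming step. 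One simply applies this with $r = k$ so that a single class of $k$ sets with $S$-intersecting common point allows us to invoke the discrete $H$-Helly theorem.

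With these pieces the fractional statement follows the Bárány--Matoušek template verbatim: starting from $\cal{B}$ with $\alpha\binom{|\cal{B}|}{d+1}$ many $S$-intersecting $(d+1)$-tuples, Erdős--Simonovits produces a complete $(d+1)$-partite $(d+1)$-uniform subhypergraph $K^{d+1}(t)$ with $t$ large enough to invoke the colorful $H$-Halman with few colors, yielding $k$ sets in a single color class whose intersection contains a point of $S$; an averaging argument on the number of such $K^{d+1}(t)$'s then gives $\beta |\cal B|$ sets of $\cal B$ with a common point of $S$, where $\beta$ depends only on $\alpha, d, k$. The main obstacle is the intermixing lemma for general $H$: the box proof exploited separation by an axis-aligned hyperplane, and here one must verify that because $H$ is not contained in any closed half-sphere, every point outside $\conv_H(S_i)$ can indeed be separated from it by a halfspace $H(h,c)$ with $h \in H$. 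This is precisely the defining property of $H$-convex sets, so the argument goes through.
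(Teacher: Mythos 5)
Your overall architecture is the one the paper intends: \cref{thm:fractional-small-hconvex} is proved by rerunning \cref{sec:fractional-and-beyond} with $\bbox$ replaced by the $H$-convex hull, the threshold $\tfrac{1}{2d}$ replaced by $\tfrac{1}{k}$ (via \cref{thm:discrete-helly-hconvex}), $r=k$ in the few-colors theorem, and $d+1$ color classes kept because the final contradiction uses the ordinary colorful Helly theorem. Two of your steps, however, do not survive as stated. First, in your $H$-analog of \cref{lem: intermixing consequence} the conclusion $\operatorname{conv}_H(S'_m)\cap\operatorname{conv}_H(S'_n)=\emptyset$ is false in general when $H$ is not symmetric. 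The halfspace produced by a failure of intermixing is of the form $\{x:\langle x,h\rangle\ge c\}$ with $h\in H$ (note also that your definition of intermixedness quantifies over $H(h,c)=\{x:\langle x,h\rangle\le c\}$, which is the wrong orientation for your own proof of the intermixing lemma); such a halfspace is not $H$-convex unless $-h\in H$, so the $H$-hull of the points it contains can cross the hyperplane $\langle x,h\rangle=c$. For example, if $H$ consists of the three outer normals of a triangle in the plane (so $H$-convex sets are homothetic triangles), two far-apart points on the line $\langle x,h\rangle=c$ have an $H$-hull dipping far below that line, and it can meet the $H$-hull of points lying just below it. The repair is easy but must be made: state alternative (2) as ``the two restricted point sets are separated by a hyperplane with normal $h\in H$'' (equivalently, their ordinary convex hulls are disjoint), define $(r,2)$-disjointness with ordinary convex hulls, and run the colorful Helly contradiction with those hulls --- that contradiction only needs convex sets containing the relevant points $s_e$, and disjointness persists as edge sets shrink.

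Second, the ending is not yet a proof. Applying the few-colors theorem with $r=k$ to each copy of $K^{d+1}(t)$ and averaging over the $\delta n^{(d+1)t}$ copies yields a positive fraction of the $k$-tuples of $\cal B$ whose intersection contains a point of $S$; it does not by itself yield $\beta|\cal B|$ sets with a common point, and ``invoking the discrete $H$-Helly theorem'' on a single such $k$-tuple gives nothing, since that theorem requires every $k$-tuple of a subfamily to meet $S$. The missing ingredient is the $H$-convex analog of \cref{thm:fractional-strong}: the discrete nerve $K(\cal B, S)$ built as in \cref{sec:topological} is $(k-1)$-collapsible, hence $(k-1)$-Leray, and Kalai's fractional Helly theorem then converts ``an $\alpha'$-fraction of $k$-tuples meet $S$'' into a subfamily of size $\beta|\cal B|$ with a common point of $S$. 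This is precisely why the paper fixes $r$ equal to the Helly number ($2d$ for boxes, $k$ here): the output of \cref{thm:colorful-halman-few-colors} is meant to feed the $k$-tuple fractional theorem, not the exact Helly theorem. With these two corrections your argument coincides with the paper's.
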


In this case, even though we can make the Helly number for $H$-convex sets as large as we want, the fractional version only requires us to verify $(d+1)$-tuples.

% \bib, bibdiv, biblist are defined by the amsrefs package.
\begin{bibdiv}
\begin{biblist}

\bib{Alon:1992ta}{article}{
      author={Alon, Noga},
      author={Kleitman, Daniel~J.},
       title={{Piercing convex sets and the Hadwiger-Debrunner (p,
  q)-problem}},
        date={1992},
     journal={Advances in Mathematics},
      volume={96},
      number={1},
       pages={103\ndash 112},
}

\bib{Alon:1996uf}{article}{
      author={Alon, Noga},
      author={Kleitman, Daniel~J.},
       title={{A Purely Combinatorial Proof of the Hadwiger Debrunner \$(p,q)\$
  Conjecture}},
        date={1996-01},
     journal={the electronic journal of combinatorics},
      volume={4},
      number={2},
       pages={R1},
}

\bib{Aliev:2014kh}{incollection}{
      author={Aliev, Iskander},
      author={Loera, Jesús A.~De},
      author={Louveaux, Quentin},
       title={{Integer Programs with Prescribed Number of Solutions and a
  Weighted Version of Doignon-Bell-Scarf’s Theorem}},
    language={English},
        date={2014-01},
      series={Integer Programming and Combinatorial Optimization},
      volume={8494},
   publisher={Springer International Publishing},
       pages={37\ndash 51},
}

\bib{Amenta:2017ed}{book}{
      author={Amenta, Nina},
      author={Loera, Jesús A.~De},
      author={Soberón, Pablo},
       title={{Helly’s theorem: New variations and applications}},
      series={American Mathematical Society},
   publisher={American Mathematical Society},
        date={2017},
      volume={685},
}

\bib{Averkov2012}{article}{
      author={Averkov, G.},
      author={Weismantel, R.},
       title={Transversal numbers over subsets of linear spaces},
        date={2012},
        ISSN={1615-715X,1615-7168},
     journal={Adv. Geom.},
      volume={12},
      number={1},
       pages={19\ndash 28},
         url={https://doi.org/10.1515/advgeom.2011.028},
}

\bib{Bell:1977tm}{article}{
      author={Bell, David~E.},
       title={{A theorem concerning the integer lattice}},
        date={1976},
     journal={Studies in Appl. Math.},
      volume={56},
      number={2},
       pages={187\ndash 188},
}

\bib{Barany2022}{article}{
      author={B\'{a}r\'{a}ny, Imre},
      author={Kalai, Gil},
       title={Helly-type problems},
        date={2022},
        ISSN={0273-0979,1088-9485},
     journal={Bull. Amer. Math. Soc. (N.S.)},
      volume={59},
      number={4},
       pages={471\ndash 502},
         url={https://doi.org/10.1090/bull/1753},
}

\bib{Boltyanski:2003ir}{article}{
      author={Boltyanski, Vladimir},
      author={Martini, Horst},
       title={{Minkowski addition of H-convex sets and related Helly-type
  theorems}},
        date={2003},
     journal={Journal of Combinatorial Theory, Series A},
      volume={103},
      number={2},
       pages={323\ndash 336},
}

\bib{Anonymous:PHt9HPGF}{article}{
      author={Bárány, Imre},
      author={Matoušek, Jiří},
       title={{A fractional Helly theorem for convex lattice sets}},
        date={2003},
     journal={Advances in Mathematics},
      volume={174},
      number={2},
       pages={227\ndash 235},
}

\bib{Chestnut2018}{article}{
      author={Chestnut, Stephen~R.},
      author={Hildebrand, Robert},
      author={Zenklusen, Rico},
       title={Sublinear bounds for a quantitative {D}oignon-{B}ell-{S}carf
  theorem},
        date={2018},
        ISSN={0895-4801,1095-7146},
     journal={SIAM J. Discrete Math.},
      volume={32},
      number={1},
       pages={352\ndash 371},
         url={https://doi.org/10.1137/16M1100095},
}

\bib{Doignon:1973ht}{article}{
      author={Doignon, Jean-Paul},
       title={{Convexity in cristallographical lattices}},
        date={1973},
     journal={Journal of Geometry},
      volume={3},
      number={1},
       pages={71\ndash 85},
}

\bib{Dillon2021}{article}{
      author={Dillon, Travis},
      author={Sober\'{o}n, Pablo},
       title={A m\'{e}lange of diameter {H}elly-type theorems},
        date={2021},
        ISSN={0895-4801},
     journal={SIAM J. Discrete Math.},
      volume={35},
      number={3},
       pages={1615\ndash 1627},
         url={https://doi.org/10.1137/20M1365119},
}

\bib{Eckhoff:1988eo}{article}{
      author={Eckhoff, J\"urgen},
       title={{Intersection properties of boxes. Part I: An upper-bound
  theorem}},
        date={1988},
     journal={Israel journal of mathematics},
      volume={62},
      number={3},
       pages={283\ndash 301},
}

\bib{Eckhoff:1991jj}{article}{
      author={Eckhoff, J\"urgen},
       title={{Intersection properties of boxes part II: Extremal families}},
        date={1991},
     journal={Israel journal of mathematics},
      volume={73},
      number={2},
       pages={129\ndash 149},
}

\bib{Erdos1983}{article}{
      author={Erd\H{o}s, Paul},
      author={Simonovits, Mikl\'{o}s},
       title={Supersaturated graphs and hypergraphs},
        date={1983},
        ISSN={0209-9683},
     journal={Combinatorica},
      volume={3},
      number={2},
       pages={181\ndash 192},
         url={https://doi.org/10.1007/BF02579292},
}

\bib{Frankl2024}{article}{
      author={Frankl, N{\'o}ra},
      author={Jung, Attila},
      author={Tomon, Istv{\'a}n},
       title={The quantitative fractional helly theorem},
        date={2024},
     journal={arXiv preprint arXiv:2402.12268},
}

\bib{Goaoc2021}{article}{
      author={Goaoc, Xavier},
      author={Holmsen, Andreas~F},
      author={Pat{\'a}kov{\'a}, Zuzana},
       title={A stepping-up lemma for topological set systems},
        date={2021},
     journal={arXiv preprint arXiv:2103.09286},
}

\bib{Halman2008}{article}{
      author={Halman, Nir},
       title={{Discrete and lexicographic Helly-type theorems}},
        date={2008},
     journal={Discrete \& Computational Geometry},
      volume={39},
      number={4},
       pages={690\ndash 719},
}

\bib{Helly:1923wr}{article}{
      author={Helly, Eduard},
       title={{Über Mengen konvexer Körper mit gemeinschaftlichen Punkte.}},
        date={1923},
     journal={Jahresbericht der Deutschen Mathematiker-Vereinigung},
      volume={32},
       pages={175\ndash 176},
}

\bib{Holmsen:2017uf}{incollection}{
      author={Holmsen, Andreas~F.},
      author={Wenger, Rephael},
       title={{Helly-type theorems and geometric transversals}},
        date={2017},
     edition={3},
      series={Handbook of Discrete and Computational Geometry},
   publisher={Chapman and Hall/CRC},
       pages={91\ndash 123},
}

\bib{Kalai:1984bg}{article}{
      author={Kalai, Gil},
       title={{Intersection patterns of convex sets}},
        date={1984},
     journal={Israel journal of mathematics},
      volume={48},
      number={2-3},
       pages={161\ndash 174},
         url={http://www.springerlink.com/content/t7h265w287246m06/},
}

\bib{Kalai:2005tb}{article}{
      author={Kalai, Gil},
      author={Meshulam, Roy},
       title={{A topological colorful Helly theorem}},
        date={2005},
        ISSN={0001-8708},
     journal={Advances in Mathematics},
      volume={191},
      number={2},
       pages={305\ndash 311},
}

\bib{DeLoera:2017bl}{article}{
      author={Loera, Jesús A.~De},
      author={Haye, Reuben N.~La},
      author={Rolnick, David},
      author={Soberón, Pablo},
       title={{Quantitative Tverberg Theorems Over Lattices and Other Discrete
  Sets}},
        date={2017},
     journal={Discrete \& Computational Geometry},
      volume={58},
      number={2},
       pages={435\ndash 448},
}

\bib{Scarf:1977va}{article}{
      author={Scarf, Herbert~E.},
       title={{An observation on the structure of production sets with
  indivisibilities}},
        date={1977},
        ISSN={0027-8424},
     journal={Proceedings of the National Academy of Sciences},
      volume={74},
      number={9},
       pages={3637\ndash 3641},
}

\bib{Sarkar2021}{article}{
      author={Sarkar, Sherry},
      author={Xue, Alexander},
      author={Sober{\'o}n, Pablo},
       title={Quantitative combinatorial geometry for concave functions},
        date={2021},
     journal={Journal of Combinatorial Theory, Series A},
      volume={182},
       pages={105465},
}

\bib{Tancer:2013iz}{incollection}{
      author={Tancer, Martin},
       title={{Intersection Patterns of Convex Sets via Simplicial Complexes: A
  Survey}},
    language={English},
        date={2012-10},
      series={Thirty Essays on Geometric Graph Theory},
   publisher={Springer New York},
       pages={521\ndash 540},
  url={http://link.springer.com.ezproxy.neu.edu/chapter/10.1007/978-1-4614-0110-0\_28/fulltext.html},
}

\end{biblist}
\end{bibdiv}

\end{document}